\newcommand{\RR}{\ensuremath{\mathbb{R}}}
\newcommand{\NN}{\ensuremath{\mathbb{N}}}
\newcommand{\ZZ}{\ensuremath{\mathbb{Z}}}
\newcommand{\epsi}{\varepsilon}
\newcommand{\tens}[1]{{\mathbf #1}}
\newcommand{\perf}{_{\hbox{\scriptsize perf}}}
\renewcommand{\dfrac}{\displaystyle\frac}
\newcommand{\M}{\ensuremath{\mathcal{C}}}
\newcommand{\dual}[2]{\langle #1 , #2 \rangle}
\theoremstyle{plain}
\newtheorem{theorem}{Theorem}
\newtheorem{lemma}[theorem]{Lemma}
\newtheorem{proposition}[theorem]{Proposition}
\newtheorem{corollary}[theorem]{Corollary}
\theoremstyle{definition}
\newtheorem{definition}[theorem]{Definition}
\newtheorem{remark}[theorem]{Remark}
\newtheorem{algorithm}[theorem]{Algorithm}
\begin{document}

\title{A gradient-type algorithm for constrained optimization with applications \\
to multi-objective optimization of auxetic materials} 
\author{C.~Barbarosie \footnote{
CMAF-CIO, Faculdade de Ci\^encias, Universidade de Lisboa, 1749-016 Lisboa, Portugal.
\texttt{cabarbarosie@fc.ul.pt}}, 
S.~Lopes \footnote{CMAF-CIO , Faculdade de Ci\^encias, Universidade de Lisboa, 1749-016 Lisboa, Portugal and
Departmental Area of Mathematics, ISEL - Instituto Superior de Engenharia de Lisboa, Instituto Polit\'ecnico de Lisboa,
							 Rua Conselheiro Em\'idio Navarro, 1959-007 Lisboa, Portugal.
\texttt{slopes@adm.isel.pt}}, 
A.M.~Toader \footnote{
CMAF-CIO, Faculdade de Ci\^encias, Universidade de Lisboa, 1749-016 Lisboa, Portugal.
\texttt{atoader@fc.ul.pt}}}


\maketitle

\begin{abstract}
An algorithm is devised for solving minimization problems with equality
constraints.
The algorithm uses first-order derivatives of both the objective function and the
constraints.
The step is computed as a sum between a steepest-descent step (which minimizes the
objective functional) and a correction step related to the Newton method
(which aims to solve the equality constraints).
The linear combination between these two steps involves coefficients
similar to Lagrange multipliers which are computed in a natural way based on
the Newton method.
The algorithm uses no projection and thus the iterates are not feasible; the
constraints are satisfied only in the limit (after convergence).
This algorithm was proposed by one of the authors in a previous paper.
In the present paper, a local convergence result is proven for a general 
non-linear setting, where both the objective functional and the constraints 
are not necessarily convex functions.
The algorithm is extended, by means of an active set strategy,
to account also for inequality constraints and to address minimax problems.
The method is then applied to the optimization of periodic 
microstructures for obtaining homogenized elastic tensors having negative 
Poisson ratio (so-called auxetic materials) using shape and/or topology 
variations in the model hole. 
In previous works of the same authors, anisotropic homogenized tensors 
have been obtained which exhibit negative Poisson ratio in a prescribed 
direction of the plane. 
In the present work, a new approach is proposed,
that employs multi-objective optimization in order to minimize 
the Poisson ratio of the (possibly anisotropic) homogenized elastic tensor 
in several prescribed directions of the plane.
Numerical examples are presented.
\end{abstract}




Keywords : 
non-linear programming, constrained minimization, multi-objective optimization,
optimization of microstructures, porous materials, microstructure, auxetic materials




\section{Introduction}
    
We propose a numerical method for the minimization (or maximization)
of a functional, subject to constraints. 
We also present a numerical application of this method.

Section~\ref{sec: minimization-alg} is devoted to the description of 
the algorithm and to a convergence result, along with extensions to accomodate
inequality constraints and to deal with minimax problems.
The method seeks for local solutions and works in the general case, 
for non-linear and non-convex objective functional and constraints.
Smoothness of these functions is however required, since their
gradients are used.
The method is two-fold, involving a sum between a steepest-descent step 
(which minimizes the objective functional) and a correction step related to 
the Newton method (which aims to solve the equality constraints).
The linear combination between these two steps uses certain coefficients
similar to Lagrange multipliers which are computed in a natural way based on
the Newton method.
The algorithm uses no projection and thus the iterates are not feasible; the
constraints are satisfied only in the limit (after convergence).
Convergence is proven under the hypothesis that a certain Hessian-like matrix 
is positive definite at the solution.
Inequality constraints are dealt with by using an active set methodology.
Special attention is given to activation and deactivation strategies;
the deactivation criterion is entirely based on the sign of the associated
Lagrange multiplier.
The case of an infinite (continuous) family of inequality constraints is
discussed, as well as the extension of the method for treating minimax problems.

Section~\ref{sec: Poisson coef} shows an application on a large-scale example,
involving the minimization of the Poisson ratio(s) of a composite material,
in the context of homogenization theory.
To achieve this goal, we perform shape and/or topology variations in
the model hole that characterizes the microstructure.
We use the minimax algorithm in order to minimize in simultaneous the Poisson
coefficient of the (possibly anisotropic) homogenized material alog many
directions of the plane.

Some closing comments are made in Section~\ref{conclusion}.

\section{The minimization algorithm}
\label{sec: minimization-alg}

In this Section, we propose an algorithm for the minimization of a functional
subject to constraints.
The algorithm seeks for local solutions (as usual for gradient-based methods).
It deals with \emph{non-essential constraints}, that is, with constraints whose
violation does not render the problem ill-posed.
In the method here proposed, the constraints are usually violated during the
optimization process, and become satisfied only in the limit (after convergence);
see, however, subsection~\ref{sec: box} for an exception.

We begin by describing the case of equality constraints (subsection~\ref{sec: alg-eq}).
The method here described has already been used in \cite{Bar03} by one of the authors
to solve large-scale optimization problems;
in the present paper, the convergence of the algorithm is proven under
the hypothesis that a certain Hessian-like matrix is positive definite
at the solution (Theorem~\ref{th: convergence} in subsection~\ref{sec: convergence}).
Our methodology can be regarded as a gradient method applied in
the direction tangent to the manifold determined by the constraints, 
together with a Newton method applied in the orthogonal direction. 
This method, although not very fast (it has linear convergence) 
is quite natural, easy to implement, and has the advantage of requiring
solely the first derivatives of the objective and of the constraint
functions. 

		A generalization is proposed in subsections~\ref{sec: ineq} and
\ref{sec: deactiv} which deals with inequality constraints,
    based on an active-set strategy. During the optimization process,
    an inequality constraint is activated as soon as it is violated.
    Its deactivation depends on the sign of the associated Lagrange multiplier.
    To some extent, this procedure can be seen as a generalization of 
    the simplex method to nonlinear functions.
		Based on these ideas, in subsection~\ref{sec: minimax} we further extend the algorithm with minimax
		problems in sight.


    The following notation will be used: $ x\in\RR^n $ is the vector of \emph{variables}
		(also called \emph{unknowns} or \emph{parameters}); $ x_i $ will denote the components
		of $x$ while $ x^{(k)} $ will denote a sequence of vectors; $f$ is the \emph{objective
		function}, a scalar function of $x$ that we want to minimize or maximize;
		the \emph{constraints} will be modelled by a vector function $ g : \RR^n \to \RR^m $.
    The Jacobian matrix of a vector function $g$ will be denoted by $ Dg $ while its
		transpose will be denoted by $ \nabla g $.
    In particular, for a scalar function $f$, $ \nabla\! f $ will be the usual gradient.
    The Hessian matrix of $f$ will be denoted by $ D^2 f $.
Partial derivatives will be denoted by a comma, \textit{e.g.} $ g_{i,j} = 
\frac{\partial g_i}{\partial x_j} $.

We have collected in Appendix \ref{sec: unconstr} some well-known results on unconstrained
optimization, while Appendix \ref{sec: constr} gives the theoretical
background on (equality) constrained optimization.

  \subsection{A gradient algorithm for equality constrained problems}
  \label{sec: alg-eq}

Consider the minimization problem
    \[ \min_{x \in \M} f(x), \qquad \M = \{ x \in \RR^n : \; g(x) = 0
        \}. \tag{$\mathcal P$} \] 
    A typical case in structural design arises when engineers adjust the parameters (variables) to optimize the
    performance of a structure while keeping a prescribed \emph{cost}. In such a framework, the constraint function $g$
    appearing in ($\mathcal P$) is thought of as a \emph{cost function}, a scalar function that (in a broad sense)
    stands for the structure's ``price'' (or more precisely, the difference between the cost function and a prescribed
    ``price''). For presentation purposes, the discussion will be 
    initially restricted to this case of only one constraint ($ m=1 $) and
    subsequently extended to account for multiple constraints.

For the treatment of ($\mathcal P$) we will try to combine 
the ideas from Appendix \ref{sec: unconstr} with those from Appendix \ref{sec: newton}.
The algorithm should pursue two goals simultaneously :
decrease the value of $f$ while solving the equation $ g = 0 $. 
Our approach sets up a direction that targets  both goals at once, as described below.

    Given an iterate $x^{(k)}$, the next iterate will be defined by an
increment $\delta^{(k)}$, that is, $ x^{(k+1)} = x^{(k)} + \delta^{(k)} $.
The increment $\delta^{(k)}$ will be
the sum of two components: one of them is the vector 
    $ - \eta\, \nabla\! f(x^{(k)}) $ (with $ \eta > 0 $ fixed) 
    corresponding to the steepest descent algorithm; 
    the other one aims at fulfilling the constraint equation
    $ g = 0 $ and has the form $ -\lambda^{(k)}\, \nabla g(x^{(k)}) $, 
    where $\lambda^{(k)}\in\RR$ is a sort of Lagrange multiplier:
\[ \delta^{(k)} = - \eta\, \nabla\! f(x^{(k)}) - \lambda^{(k)}\, \nabla g(x^{(k)})  \]
The multiplier $ \lambda^{(k)} $ is defined adaptively in a natural way,
inspired in Proposition~\ref{prop: Newton}, Appendix \ref{sec: newton}. 
It suffices to impose the Newton-type condition, relative to the equation $ g = 0 $,
\[ \dual{\nabla g(x^{(k)})}{\delta^{(k)}} = -g(x^{(k)}) \] 
which is immediately solvable: 
\begin{equation}\label{eq: lambda}
\lambda^{(k)} = \frac{g(x^{(k)}) - \eta\, \dual{\nabla g(x^{(k)})}{\nabla\! f(x^{(k)})}}{\| \nabla g(x^{(k)}) \|^2} \cdot
\end{equation}
    With this choice of the multiplier, the whole procedure amounts to
    performing a ``tangential gradient method'' to minimize $f$, together with a unidimensional 
    Newton method to solve the constraint equation $ g = 0 $.

    To better understand the last assertion, consider the following reasoning. In the neighborhood of a solution
    $x^\ast$ there are two main directions to consider from $x^{(k)}$: the direction $ \nabla g(x^{(k)}) $, orthogonal to the
    level set $ \M_k = \{ y \in \RR^n : \; g(y) = g(x^{(k)}) \} $, and the subspace orthogonal to it 
    (whose vectors are tangent to $\M_k$ at $x^{(k)}$). 
    In this latter subspace we have to minimize $f$
    (note that, since the solution $ x^\ast $ should minimize $f$ in a level
    set of $g$, $ \M $, there is no point in decreasing $f$ along directions other than tangent ones); in
    the direction of $ \nabla g(x^{(k)}) $ we want to solve the equation $ g = 0 $, moving the next iterate closer to $\M$.
	A very simple method is obtained which, somewhat surprisingly, is not mentioned in the literature.

\begin{remark}
The algorithm here proposed is somewhat similar to the Newton method described in 
\cite[Section 12.1]{BS03} with the major difference that we use information related to
the first derivatives only (of both the objective function and the constraints).
Note that there are many practical problems in which second derivatives are
impossible (or very expensive) to compute, see Section \ref{sec: Poisson coef} of
the present paper for an example.
Note also that we do not use any projection matrices (see \textit{e.g.} \cite{JR61});
we don't need to project since our iterates do not have to satisfy the constraints.
Also, we have no need of introducing penalty functions (see \textit{e.g.} \cite{JB77}).
\end{remark}


The algorithm generalizes naturally 
to vector-valued constraint functions $ g : \RR^n \to \RR^m $ (with $ m < n $).
In this case $ \lambda^{(k)} \in \RR^m $ but the
iterates are defined in a similar fashion by 
\[ x^{(k+1)} = x^{(k)} - \eta\, \nabla\! f(x^{(k)}) - \nabla g(x^{(k)})\, \lambda^{(k)}.\]
By imposing the Newton-type condition (again, inspired in 
Proposition~\ref{prop: Newton}, Appendix \ref{sec: newton})
$$
Dg(x^{(k)})\,\delta^{(k)} = - g(x^{(k)})\,,
$$
we obtain
\begin{equation}\label{eq: lambda-sys}
D g(x^{(k)})\, \nabla g(x^{(k)})\, \lambda^{(k)} = g(x^{(k)}) - \eta\, D g(x^{(k)})\, \nabla\! f(x^{(k)}).
\end{equation}
In coordinate notation :
\[ x^{(k+1)}_j = x^{(k)}_j - \eta\, f_{,j}(x^{(k)}) - \sum_{i=1}^m \lambda^{(k)}_i\, g_{i,j}(x^{(k)}),
\: 1 \leqslant j \leqslant n, \]
where
\[ \sum_{i=1}^m \sum_{j=1}^n g_{l,j}(x^{(k)})\, g_{i,j}(x^{(k)})\, \lambda^{(k)}_i = 
g_l(x^{(k)}) - \sum_{j=1}^n \eta\, g_{l,j}(x^{(k)})\, f_{,j}(x^{(k)}), \: 1 \leqslant l \leqslant m. \]
    This linear system of equations uniquely determines $\lambda^{(k)}$ 
    if $ D g(x^{(k)})$ has full rank (equal to $m$); 
see Definition~\ref{regular point} in Appendix \ref{sec: constr} 
and the comments following it. 
    Even in the case of vector-valued constraints, the method can be interpreted 
    geometrically as a steepest descent method in the directions tangent to
    $ \M_k $ combined with a Newton method in the directions normal to $ \M_k $.

   \begin{algorithm}\label{alg: eq}~\\
      \noindent INPUT: initial guess $x^{(0)}$, step size $ \eta > 0 $, 
        tolerance $ \epsi > 0 $, maximum number of iterations $N$. \\
      \noindent OUTPUT: approximate solution $x$ or message of failure. \\
      \noindent\textbf{Step 1} With $k$ from $1$ to $N$, do Steps 2--5. \\
      \indent\textbf{Step 2} Compute $ \lambda $ by solving $ Dg(x^{(0)}) \,
             \nabla g(x^{(0)})\, \lambda = g(x^{(0)}) - \eta\, Dg(x^{(0)})\, \nabla\! f(x^{(0)}) $. \\
      \indent\textbf{Step 3} Set $ x = x^{(0)} - \eta\, \nabla\! f(x^{(0)}) - \nabla g(x^{(0)})\, \lambda $. \\
      \indent\textbf{Step 4} If $ \| x - x^{(0)} \| < \epsi $ then OUTPUT($x$); \\
      \indent\textbf{\phantom{Step 5}} \phantom{If $ \| x - x^{(0)} \| < \epsi $ then} STOP. \\
      \indent\textbf{Step 5} Set $ x^{(0)} = x $. \\
      \noindent\textbf{Step 6} OUTPUT('The method failed after $N$ iterations.'); \\
      \noindent\textbf{\phantom{Step 7}} STOP.
    \end{algorithm}

  \subsection{Convergence results}
  \label{sec: convergence}

    We now state and prove the main theorem regarding the method proposed in subsection~\ref{sec: alg-eq} above.
This result has been presented in the preprint \cite{BL11}.

\begin{theorem}\label{th: convergence}
Let $ f : \RR^n \to \RR $ and $ g : \RR^n \to \RR^m $ ($ m < n $) be twice continuously differentiable functions.
Let $ x^\ast \in \M $ be a regular point (see Definition~\ref{regular point} 
in Appendix \ref{sec: constr}) satisfying the KKT conditions (\ref{th: kkt}) and such that 
the matrix $ H^\ast = D^2 f(x^\ast) + \sum_{i=1}^m \lambda^\ast_i D^2{g_i}(x^\ast)$
is positive definite on $\mathcal{T}_{x^\ast}$ (see Theorem~\ref{th: existence} 
in Appendix \ref{sec: constr}).
Then there exists $ r > 0 $ such that, given $ x^{(0)} \in \bar{B}_r(x^\ast) $, the sequence of iterates defined by
\begin{equation}\label{eq: iterate}
x^{(k+1)} = x^{(k)} - \eta\, \nabla\! f(x^{(k)}) - \nabla g(x^{(k)})\, \lambda^{(k)}, 
\quad k \in \NN_0,
\end{equation}
with $\lambda^{(k)}$ determined by \eqref{eq: lambda-sys}, converges linearly to 
$x^\ast$ for sufficiently smal step lengths $ \eta > 0 $.
\end{theorem}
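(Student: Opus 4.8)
The plan is to recast the iteration \eqref{eq: iterate}--\eqref{eq: lambda-sys} as a fixed-point iteration $x^{(k+1)} = \Phi(x^{(k)})$ and to invoke the standard local convergence theorem for such iterations (Ostrowski's theorem): if $\Phi$ is $C^1$ near a fixed point $x^\ast$ and the spectral radius satisfies $\rho(D\Phi(x^\ast)) < 1$, then the iterates converge to $x^\ast$ for every starting point in a sufficiently small ball, with linear asymptotic rate $\rho(D\Phi(x^\ast))$. Concretely, set $A(x) = Dg(x)$, $N(x) = Dg(x)\,\nabla g(x)$, and $\lambda(x) = N(x)^{-1}\bigl(g(x) - \eta\,A(x)\,\nabla\! f(x)\bigr)$, so that $\Phi(x) = x - \eta\,\nabla\! f(x) - \nabla g(x)\,\lambda(x)$. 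Since $x^\ast$ is a regular point, $A$ has full rank in a neighborhood, hence $N$ is invertible there and $\Phi$ is $C^1$. First I would verify that $x^\ast$ is a fixed point: using $g(x^\ast) = 0$ together with the KKT relation $\nabla\! f(x^\ast) + \nabla g(x^\ast)\lambda^\ast = 0$ and the invertibility of $N(x^\ast)$, the defining system \eqref{eq: lambda-sys} collapses to $\lambda(x^\ast) = \eta\,\lambda^\ast$, whence $\Phi(x^\ast) = x^\ast - \eta\bigl(\nabla\! f(x^\ast) + \nabla g(x^\ast)\lambda^\ast\bigr) = x^\ast$.

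The core of the argument is the computation and spectral analysis of $D\Phi(x^\ast)$. Differentiating $\Phi$ and using implicit differentiation of $N(x)\lambda(x) = g(x) - \eta A(x)\nabla\! f(x)$ to obtain $D\lambda(x^\ast)$, one finds
\[
D\Phi(x^\ast)\,v = v - \eta\,H^\ast v - \nabla g(x^\ast)\,\bigl(D\lambda(x^\ast)\,v\bigr),
\]
where $H^\ast = D^2 f(x^\ast) + \sum_i \lambda_i^\ast D^2 g_i(x^\ast)$ is precisely the Hessian-like matrix in the hypothesis, the term $\sum_i\lambda_i^\ast D^2 g_i$ arising when one differentiates $\nabla g(x)\lambda(x)$ and substitutes $\lambda(x^\ast)=\eta\lambda^\ast$. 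The decisive step is to read this operator through the orthogonal splitting $\RR^n = \mathcal{T}_{x^\ast}\oplus\mathcal{N}_{x^\ast}$, where $\mathcal{T}_{x^\ast} = \ker A(x^\ast)$ is the tangent space and $\mathcal{N}_{x^\ast} = \mathrm{range}\,\nabla g(x^\ast)$ is its orthogonal complement. A short computation with the implicit formula for $D\lambda(x^\ast)$ shows that at $\eta = 0$ the map $D\Phi(x^\ast)$ acts as the identity on $\mathcal{T}_{x^\ast}$ (the pure Newton correction does not move points along the constraint manifold) and as zero on $\mathcal{N}_{x^\ast}$ (the Newton step annihilates the normal error to first order); that is, $D\Phi(x^\ast)|_{\eta=0} = \mathrm{diag}(I,0)$ in this splitting.

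Finally I would track the eigenvalues as $\eta$ grows from $0$ by first-order perturbation theory. Both the eigenvalue $1$ (eigenspace $\mathcal{T}_{x^\ast}$) and the eigenvalue $0$ (eigenspace $\mathcal{N}_{x^\ast}$) are semisimple at $\eta = 0$, so their perturbations are governed by the compressions of $\tfrac{d}{d\eta}D\Phi(x^\ast)|_{\eta=0}$ to the respective eigenspaces. Because $\nabla g(x^\ast)\,(D\lambda(x^\ast)v)$ always lies in $\mathcal{N}_{x^\ast}$, the tangent compression is exactly $-P_{\mathcal{T}}H^\ast|_{\mathcal{T}_{x^\ast}}$, which is symmetric and, by hypothesis, positive definite with eigenvalues $\mu_1,\dots,\mu_{n-m} > 0$; hence the eigenvalues branching off from $1$ are $1 - \eta\mu_i + o(\eta)$, lying in $(0,1)$ for small $\eta > 0$, while those branching off from $0$ stay $O(\eta)$ and thus have modulus below $1$. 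Consequently $\rho(D\Phi(x^\ast)) < 1$ for all sufficiently small $\eta > 0$, and Ostrowski's theorem delivers the asserted local linear convergence. I expect the main obstacle to be the bookkeeping in computing $D\lambda(x^\ast)$ and in verifying the exact block structure $\mathrm{diag}(I,0)$ at $\eta = 0$; once that structure is established, the spectral-radius perturbation argument is routine, though one must confirm semisimplicity to legitimately invoke first-order eigenvalue perturbation.
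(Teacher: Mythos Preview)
Your fixed-point framework, the verification that $x^\ast$ is a fixed point, and the identification of $H^\ast$ in $D\Phi(x^\ast)$ all match the paper's approach. The difference lies in how the spectral radius is controlled. The paper rewrites the map as $S(x) = x - \eta P(x)[\nabla f(x) + \nabla g(x)\lambda^\ast] - K(x)g(x)$, where $P(x)$ is the orthogonal projector onto $\mathcal{T}_x$ and $K(x)=\nabla g(x)[Dg(x)\nabla g(x)]^{-1}$; because the bracketed factor and $g$ both vanish at $x^\ast$, differentiation yields the \emph{exact} closed form $DS(x^\ast)=P(x^\ast)(I-\eta H^\ast)$ for every $\eta$. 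A short lemma on projections then gives $\rho(PA)\le \rho(A|_{\mathrm{Ran}\,P})$ for self-adjoint $A$, reducing the question to the unconstrained estimate on $I-\eta H^\ast$ restricted to $\mathcal{T}_{x^\ast}$, with the explicit range $0<\eta<2/\mu_{\max}$. Your route instead treats $\eta$ as a perturbation parameter and tracks the eigenvalues from $P(x^\ast)=\mathrm{diag}(I,0)$ by first-order theory. This is correct (indeed, since the range of $P(I-\eta H^\ast)$ is contained in $\mathcal{T}_{x^\ast}$, the matrix is block upper triangular with a zero block, so the ``$O(\eta)$'' eigenvalues near $0$ are in fact exactly $0$, and the others are exactly the eigenvalues of the compression $I-\eta P_{\mathcal{T}}H^\ast|_{\mathcal{T}}$); but the perturbative argument is more machinery than needed and yields only an implicit smallness condition on $\eta$. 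The paper's rewriting trick buys an explicit Jacobian and a sharper, self-contained spectral bound, whereas your approach trades that for the generality of eigenvalue perturbation theory.
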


    The reasoning follows the same patern of the proof of Theorem~\ref{th: descent}
in Appendix \ref{sec: unconstr}, but a bit more care will have to
    be exercised in this case. First of all, an auxiliary result is established.
    \begin{lemma}\label{lem: radius}
Let $ P \not\equiv 0 $ be an orthogonal projection on $\RR^n$. 
If $ A \not\equiv 0 $ is a self-adjoint linear
operator on $\RR^n$, then $ v \neq 0 $ is an eigenvector of $PA$, 
associated with the eigenvalue $ \mu \neq 0 $, if and only if
      \begin{itemize}
        \item[$\boldsymbol{(i)}$]  $ v \in \text{Ran}(P) $,
        \item[$\boldsymbol{(ii)}$] $ (A - \mu I)v \in \text{Ker}(P) $.
      \end{itemize}
      Hence, the following estimate of the spectral radius holds: $ \rho(PA) \leqslant \rho( A\big|_{\text{Ran}(P)} ) $.
    \end{lemma}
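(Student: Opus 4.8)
The plan is to prove the equivalence first and then extract the spectral-radius bound as an immediate consequence. Everything rests on the defining relations $P^2=P=P^\ast$ of an orthogonal projection, which give $Pw=w$ for $w\in\text{Ran}(P)$ and the orthogonal splitting $\text{Ker}(P)=\text{Ran}(P)^\perp$.

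For the forward implication I would start from $PAv=\mu v$ with $v\neq 0$, $\mu\neq 0$. Writing $v=\mu^{-1}PAv=P(\mu^{-1}Av)$ shows immediately that $v\in\text{Ran}(P)$, which is $(i)$ and also yields $Pv=v$. Substituting this into the eigen-relation, $\mu v=PAv=P(A-\mu I)v+\mu Pv=P(A-\mu I)v+\mu v$, so $P(A-\mu I)v=0$, i.e.\ $(A-\mu I)v\in\text{Ker}(P)$, which is $(ii)$. For the converse I would simply run these steps backwards: $(i)$ gives $Pv=v$, and applying $P$ to $(ii)$ gives $PAv-\mu Pv=0$, hence $PAv=\mu Pv=\mu v$. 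Note that only the projection property $P^2=P$ is used here; orthogonality of $P$ and self-adjointness of $A$ enter solely in the estimate.

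For the spectral-radius estimate I would take any nonzero eigenvalue $\mu$ of $PA$ with a unit eigenvector $v$. By $(i)$, $v\in\text{Ran}(P)$; by $(ii)$, $(A-\mu I)v\in\text{Ker}(P)=\text{Ran}(P)^\perp$, so $(A-\mu I)v\perp v$. Taking the inner product with $v$ collapses condition $(ii)$ into the Rayleigh identity $\mu=\langle Av,v\rangle$, which is real since $A$ is self-adjoint. Consequently $|\mu|=|\langle Av,v\rangle|\leqslant\sup\{\,|\langle Aw,w\rangle|:w\in\text{Ran}(P),\ \|w\|=1\,\}$, and this supremum is exactly $\rho(A\big|_{\text{Ran}(P)})$, the spectral radius of the self-adjoint compression $PAP$ regarded as an operator on $\text{Ran}(P)$. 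Since the bound holds for every nonzero eigenvalue, and the eigenvalue $0$, if present, contributes only $0\leqslant\rho(A\big|_{\text{Ran}(P)})$, taking the maximum over the whole spectrum gives $\rho(PA)\leqslant\rho(A\big|_{\text{Ran}(P)})$.

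The computations are short, so the only real subtlety I expect to dwell on is the meaning of $A\big|_{\text{Ran}(P)}$: because $A$ need not leave $\text{Ran}(P)$ invariant, this symbol must be read as the compression $PAP$ viewed on $\text{Ran}(P)$, whose spectral radius coincides with $\sup_{w\in\text{Ran}(P),\ \|w\|=1}|\langle Aw,w\rangle|$ by self-adjointness. The key structural point, and the one place where the hypotheses genuinely bite, is that orthogonality of $P$ makes $\text{Ker}(P)$ the orthogonal complement of $\text{Ran}(P)$; this is precisely what turns $(ii)$ into the identity $\mu=\langle Av,v\rangle$ and thereby confines the nonzero spectrum of $PA$ inside the numerical range of $A$ over $\text{Ran}(P)$.
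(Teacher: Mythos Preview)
Your proof of the equivalence is correct and essentially matches the paper's: the paper decomposes $Au=v+w$ along $\text{Ker}(P)\oplus\text{Ran}(P)$ and reads off the two conditions, whereas you manipulate the eigen-equation directly, but these are the same argument in different clothing.

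For the spectral-radius estimate your route genuinely differs. The paper argues via operator norms: from the eigenvector characterization it notes $\rho(PA)=\rho(PA\big|_{\text{Ran}(P)})$, then bounds this by $\|PA\big|_{\text{Ran}(P)}\|_2\leqslant\|P\|_2\,\|A\big|_{\text{Ran}(P)}\|_2$ and invokes the equality of norm and spectral radius for self-adjoint operators. You instead exploit condition $(ii)$ directly: orthogonality of $P$ turns $(A-\mu I)v\in\text{Ker}(P)$ into $(A-\mu I)v\perp v$, yielding the Rayleigh identity $\mu=\langle Av,v\rangle$, and the bound follows from the numerical-range characterization of the compression's spectral radius. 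Your approach has the virtue of making the role of both hypotheses completely transparent (orthogonality of $P$ produces the perpendicularity, self-adjointness of $A$ makes the numerical range control the spectrum), and it uses the just-proven characterization $(i)$--$(ii)$ more substantively rather than only extracting $\rho(PA)=\rho(PA\big|_{\text{Ran}(P)})$ from it. The paper's norm argument is slightly quicker to state but leaves the interpretation of $A\big|_{\text{Ran}(P)}$ a bit implicit, a point you rightly flag. Both are sound.
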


    \begin{proof}[\bf \textit{Proof}]
      The ``if'' part of the assertion is trivial. The ``only if'' part follows basically from the fact that, $P$ being
      an orthogonal projection, one has the direct sum decomposition $ \RR^n = \text{\it Ker}(P) \oplus \text{\it
      Ran}(P) $. Hence, given an eigenpair $ u \neq 0 $ and $ \mu \neq 0 $ of $PA$, there are unique $ v \in
      \text{\it Ker}(P) $ and $ w \in \text{\it Ran}(P) $ such that $ Au = v + w $; but then, $ PAu = \mu u $
      reads $ w = \mu u $. Therefore, it must be $ u \in \text{\it Ran}(P) $ and $ Au - \mu u = v \in \text{\it
      Ker}(P) $.

      The last estimate is now obvious, since $ \rho(PA) = \rho(PA\big|_{\text{\it Ran}(P)}) $ and the spectral radius
      of an operator is dominated by the $ \ell^2 $ norm of that same operator (recall also that $ \| P \|_2 = 1 $ and
      that the spectral radius of a self-adjoint operator equals its $\ell^2$ norm).
    \end{proof}

    \begin{remark}\label{obs: radius norms}
      Another useful result regarding spectral radii and matrix norms (whose proof can be found in
			\cite[Section 1.4]{PC90}), is that for any square matrix $A$ and $ \epsi > 0 $, there exists a natural
      norm with the property that $ \|A\| < \rho(A) + \epsi $. Adding to this fact the considerations made in
      Corollary~\ref{cor: contractivity}, Appendix \ref{sec: unconstr}, one concludes that contractivity properties of differentiable maps $ S :
      \RR^n \to \RR^n $ are essentially governed by the spectral radius of their Jacobian matrices:
      if $ \rho(D S(x)) < 1 $, it always exists a vector norm for which $S$ is locally contractive around $x$.
    \end{remark}

    \begin{proof}[\bf \textit{Proof of Theorem~\ref{th: convergence}}]
We begin by rewriting the algorithm to display its fixed point nature. 
Because $x^\ast$ is a regular point, $D g(x^\ast)$ has full rank and the same is true 
for $ D g(x) $ with $x$ nearby $x^\ast$.
Thus, equation \eqref{eq: lambda-sys} has a unique solution
\begin{equation}\label{eq: gen-lambda}
\lambda^{(k)} = [D g(x^{(k)})\, \nabla g(x^{(k)})]^{-1} [g(x^{(k)}) - \eta\, D g(x^{(k)})\, \nabla\! f(x^{(k)})]\,.
\end{equation}
Putting this expression into \eqref{eq: iterate} yields
\[ \begin{aligned}
 x^{(k+1)} = x^{(k)} &- \eta \underbrace{\left[ I - \nabla g(x^{(k)})\, [D g(x^{(k)})\, \nabla g(x^{(k)})]^{-1}
 D g(x^{(k)}) \right]}_{P(x^{(k)})} \nabla\! f(x^{(k)}) \\
 &- \underbrace{\nabla g(x^{(k)})\, [D g(x^{(k)})\, \nabla g(x^{(k)})]^{-1}}_{K(x^{(k)})} g(x^{(k)})\,;
 \end{aligned} \]
so $ x^{(k+1)} = S(x^{(k)}) $, upon defining $ S(x) = x - P(x)\, \nabla\! f(x) - K(x)\, g(x) $. 
Because $x^\ast$ is a regular point, the operator $S$ is well defined locally around 
$x^\ast$. Because of Remark~\ref{obs: radius norms}, one is left
to establish that $ \rho(D S(x^\ast)) < 1 $.
      
$K(x)$ is clearly a right inverse of $ D g(x) $ and it is not difficult to prove that $P(x)$ is the matrix
of the orthogonal projection onto the tangent subspace $ \mathcal T_x $ to $ \mathcal C_x = \{ y \in \RR^n :\;
g(y) = g(x) \} $ at $x$.
There are some trivial relations involving $P(x)$, $K(x)$ and $ D g(x)$, namely: $ K(x)\, D g(x) = I -
P(x) $, $ P(x)\, K(x) = 0 $ and $ P(x)\, \nabla g(x) = 0 $; in view of this last equality, one can write 
\[ S(x) =  x - \eta\, P(x)\, [\nabla\! f(x) + \nabla g(x)\, \lambda^\ast] - K(x)\, g(x), \] 
and it is now easy to see, due to the KKT
conditions \eqref{th: kkt} in Appendix \ref{sec: constr}, that the Jacobian matrix of $S$ at $x^\ast$ is
given by
\[ \begin{aligned}
D S(x^\ast) &= I - \eta\, P(x^\ast)\, H^\ast - K(x^\ast)\, D g(x^\ast) \\
&= I - \eta\, P(x^\ast)\, H^\ast - [I - P(x^\ast)] = P(x^\ast)\, (I - \eta H^\ast).
\end{aligned} \]
Since $ I - \eta H^\ast $ is a symmetric matrix and $P(x^\ast)$ is the orthogonal projection's matrix onto
$ \mathcal T_{x^\ast} $, precisely the subspace where $H^\ast$ is positive definite, recalling Lemma~\ref{lem: radius}
and the proof of Theorem~\ref{th: descent}, Appendix \ref{sec: unconstr}, the conclusion is now at hand.
\end{proof}

\begin{remark}
The ``true'' Lagrange multiplier $\lambda^\ast$ can be easily approximated because 
the functional expression defining $\lambda^{(k)}$, using either \eqref{eq: lambda} 
or \eqref{eq: gen-lambda} depending on the number of constraints, evaluates to 
$ \eta\, \lambda^\ast $ at $x^\ast$.
More precisely, the function
\[ \Lambda(x) = [Dg(x)\, \nabla g(x)]^{-1} [g(x) - \eta\, Dg(x)\, \nabla\! f(x)] \]
is well defined around $x^\ast$ and $ \Lambda(x^\ast) = \eta\, \lambda^\ast $ 
in view of the KKT conditions	\eqref{th: kkt}, Appendix \ref{sec: constr}; 
since $\Lambda$ is continuous (because $g$, $ Dg $ and $\nabla\! f$ all are),
for $x^{(k)}$ near $x^\ast$ we have $ \lambda^{(k)} = \Lambda(x^{(k)}) \approx 
\Lambda(x^\ast) $, that is $ \lambda^\ast \approx \eta^{-1} \lambda^{(k)} $.
\end{remark}

Note that the constraints $ g(x^{(k)}) $ converge to zero faster than the iterates,
see Remark \ref{rem: faster} in Appendix \ref{sec: newton}.
This, together with Remark~\ref{rem: distance}, implies that the distance between
			$x^{(k)}$ and the manifold $\M$	defined by the constraints converges to zero faster than the distance $\| x^{(k)} - x^\ast \|$.

It is interesting to observe that the proposed algorithm converges even on certain 
minimization problems which do \emph{not} satisfy the hypotheses of 
Theorem~\ref{th: convergence}.
This is the case of Example 12.1 in \cite{BS03}, brought
by the authors as an evidence that a good minimization algorithm should take
into account the curvature of the level set defined by the constraints,
that is, information from the second-order derivatives of the constraints.
The algorithm described in the present paper shows good convergence on
this example, although it uses information from the first derivatives only.

  \subsection{Extension to inequality constraints}
  \label{sec: ineq}

    We now consider the problem \[ \min_{x \in \M} f(x), \qquad \M = \{ x \in \RR^n :\; g(x) \leqslant 0 \} , \]
    where the inequality is to be understood componentwise: \[ \M = \{ x \in \RR^n :\; g_i(x) \leqslant 0, \:
    1 \leqslant i \leqslant m \} . \]

    The necessary optimality conditions for this sort of problem are better 
    expressed in terms of the active constraints at a solution 
    $ x^\ast \in  \M $, that is, those constraints which attain equality:
    \[ \mathcal A^\ast = \{ i \in \NN :\;  1 \leqslant i \leqslant m,\: 
      g_i(x^\ast) = 0  \} . \] 
    The KKT conditions can then be written as follows: 
    \[
			\begin{cases} \nabla f(x^\ast) + \sum_{i \in \mathcal A^\ast} 
      \lambda_i^\ast \nabla g_i(x^\ast) = 0, \\ 
      \begin{aligned} &g_i(x^\ast) = 0, &i \in \mathcal A^\ast, \\ 
        &g_i(x^\ast) < 0, &i \not \in \mathcal A^\ast, \\ 
        &\lambda_i^\ast \geqslant 0, &i \in \mathcal A^\ast, \\ 
        &\lambda_i^\ast = 0, &i \not \in
        \mathcal A^\ast. \end{aligned} \end{cases}
		\] 
    See, for instance, \cite[page 160]{BS03}.
    The first two equations are simply the optimality conditions
    for the equality constrained problem obtained by requiring 
    the active constraints to be zero. 
    The third condition ensures that inactive constraints are satisfied.
The last condition specifies that inactive constraints have null Lagrange 
multipliers attached; however, this condition is usually
imposed in the KKT conditions for mere convenience; the values of those multipliers
have no relevance whatsoever.
    The fourth condition is most important for practical purposes: 
    Lagrange multipliers associated with active constraints must be 
    non-negative. 
    This will be useful in order to decide when to deactivate constraints along
    the iterations.

    We propose a generalization of Algorithm~\ref{alg: eq} which
    can handle inequality constraints.
    As in Algorithm~\ref{alg: eq}, the iterates are not necessarily 
    feasible; see, however, subsection \ref{sec: box} for an exception. 
    The strategy is based on the concept of active set; 
    this means that, at each iteration, the constraints are partitioned in
    two separate groups.
    Those inequalities considered active will be treated much
    in the same manner as the equality constraints are treated in 
    Algorithm~\ref{alg: eq}.
    The inequalities considered inactive are essentially ignored.
    Obviously, the set of active indices is not constant along the
    optimization process.
Activating and deactivating inequality constraints is the central
(and difficult) point of Algorithm~\ref{alg: ineq}.

In the proposed algorithm, an inequality is activated as soon as it is
violated (step 3 in Algorithm~\ref{alg: ineq}).
The deactivation criterion is not as straightforward.
It is certainly not a good idea to deactivate a constraint as soon as it
is fulfilled again (\textit{i.e.}, when the value of $ g_i $ becomes negative again).
Recall that an active inequality constraint is treated essentially as an
equality constraint.
Recall also that in our approach the constraints are not fulfilled along
the optimization process (they are satisfied only in the limit).
So, activating and deactivating an inequality constraint on the sole criterion
of it being fulfilled or violated would often produce a
\emph{zigzagging} phenomenon (the same constraint being activated and
deactivated repeatedly).

We propose that a constraint should be kept active as long as the process of 
minimization of $f$ has the \emph{tendency} of violating that particular constraint.
In order to measure this tendency, we use the sign of the respective Lagrange 
multiplier as a criterion.
Lagrange multipliers associated to active constraints should be positive
(see the above KKT conditions).
Thus, we choose to deactivate a constraint when the associated Lagrange
multiplier becomes negative (step 6 of Algorithm~\ref{alg: ineq}).
To some extent, this procedure can be seen as a generalization of 
the simplex method to nonlinear functions.
See \cite[Section 16.5]{NW06}) for a somewhat similar strategy; note that in
\cite{NW06} a distinction is made between active constraints and a working set
of constraints, a terminology that we do not use.
See also the discussion in \cite[Section 3]{JB77} where the term ``basis'' is used
for the set of current active constraints.

The question arises as to what to do when more than one Lagrange
multiplier becomes negative at the same iteration.
Should we deactivate all the constraints corresponding to negative multipliers ?
Note that, if we deactivate one constraint, the remaining Lagrange multipliers
should be computed again, and they may change signs.
Should we deactivate only the constraint corresponding to the most negative
multiplier ?
Does it make sense to compare the value of one Lagrange multiplier to another ?
In order to fix ideas, in Algorithm~\ref{alg: ineq} we choose to deactivate
the constraint corresponding to the most negative Lagrange multiplier,
then compute again the remaining multipliers (steps 5 and 6).
A more detailed discussion of the deactivation criterion is postponed
to subsection~\ref{sec: deactiv}.

\begin{algorithm}\label{alg: ineq}~\\
 \noindent INPUT: initial guess $x^{(0)}$, step size $ \eta > 0 $,
 tolerance $ \epsi > 0 $, maximum number of iterations $N$. \\  
   \noindent OUTPUT: approximate solution $x$ or message of failure. \\
   \noindent\textbf{Step 1} Set $ \mathcal A = \varnothing $. 
     \emph{(no active constraints)} \\
   \noindent\textbf{Step 2} With $k$ from $1$ to $N$, do Steps 3--9. \\
   \indent\textbf{Step 3} With $i$ from $1$ to $m$, do \\
   \indent\indent\textbf{\phantom{Step 4}} If $ g_i(x^{(0)}) > 0 $ then set $ \mathcal A = \mathcal A \cup \{i\} $;
 \emph{(constraint $ g_i \leqslant 0 $ is being violated, thus we set it active)} \\
      \indent\textbf{Step 4} Compute $ \lambda_j $ ($ j \in \mathcal A $) by solving \\
 \indent\indent\indent $ \sum_{j \in \mathcal A} \lambda_j\, \dual{\nabla g_i(x^{(0)})}{\nabla g_j(x^{(0)})} =
       g_i(x^{(0)}) - \eta\, \dual{\nabla g_i(x^{(0)})}{\nabla\! f(x^{(0)})} $, $ i \in \mathcal A $. \\
      \indent\textbf{Step 5} Set $ i = \mathrm{arg\:min}_{j \in \mathcal A} \, \lambda_j $. \\
      \indent\textbf{Step 6} If $ \lambda_i < 0 $ then set $ \mathcal A = \mathcal A \setminus \{i\} $; 
  \emph{(constraint $ g_i \leqslant 0 $ is set inactive)} \\
      \indent\textbf{\phantom{Step 7}} \phantom{If $ \lambda_i < 0 $ then} GOTO Step 4. \\
      \indent\textbf{Step 7} Set $ x = x^{(0)} - \eta\, \nabla\! f(x^{(0)}) - \sum_{i \in \mathcal A} \lambda_i\, \nabla g_i(x^{(0)}) $.\\
      \indent\textbf{Step 8} If $ \| x - x^{(0)} \| < \epsi $ then OUTPUT($x$); \\
      \indent\textbf{\phantom{Step 9}} \phantom{If $ \| x - x^{(0)} \| < \epsi $ then} STOP. \\
      \indent\textbf{Step 9} Set $ x^{(0)} = x $. \\
      \noindent\textbf{Step 10} OUTPUT('The method failed after $N$ iterations.'); \\
      \noindent\textbf{\phantom{Step 10}} STOP.
    \end{algorithm}

    Convergence proofs for such methods usually assume some 
    idealized procedure that is hardly employed in practice.
    We prefer not to state any kind of convergence result.
    In general, convergence cannot be guaranteed and \emph{zigzagging}%
    \footnote{The set of active constraints changes many times.} 
    can sometimes occur, although experience shows it to be a rare phenomenon.

      Step 4 of Algorithm~\ref{alg: ineq} can become very heavy 
      if many constraints are active (and thus many Lagrange multipliers
      must be computed). 
      Note that the number of active constraints cannot exceed 
      the number of variables, so this can only happen for
      a large number of variables.
      In subsection~\ref{sec: box} below, we describe how the computational
      burden associated with many active constraints 
      can be significanlty alleviated in a specific particular case.

Finally, let us note that it is not difficult to combine Algorithms~\ref{alg: eq} 
and \ref{alg: ineq} in order to treat the case when equality constraints are present 
together with inequality constraints.
Simply, the equality constraints should be kept always active.

\subsection{The case of box-like constraints}
\label{sec: box}

We now turn our attention to constraints of the simple form $ g_i(x) = a_i-x_i $
or $ g_i(x) = x_i-b_i $; they confine the vector variable $x$
to a rectangular box in $ \RR^n $. 
Due to their particular form, constraints of this type deserve a special treatment.
First, it is very easy to make a projection for such inequalities.
So, they can be treated as \emph{essential} constraints by performing a projection
as soon as they are violated (in step 3),
that is, by setting $ x_i = a_i $ or $ x_i = b_i $ respectively.

Second, their gradient has only one non-zero component (in the variable $ x_i $).
Because of this, the corresponding Lagrange multipliers can be eliminated from
the linear system in step 4 of Algorithm~\ref{alg: ineq}.
Thus, it suffices to compute the other Lagrange multipliers (if any) by
solving a reduced system of linear equations.
Then, the Lagrange multipliers associated to box-like constraints can be
recovered one by one without computational effort.
We shall explain this process in some detail.

Suppose there are $ m_1 + m_2 $ active constraints;
suppose that the first $ m_1 $ of them are of box-type,
either of the form $ x_i \geqslant a_i $ or $ x_i \leqslant b_i $.
Note that this implies that each of these box-type constraints
corresponds to a certain variable; of course it is impossible for
both $ x_i \geqslant a_i $ and $ x_i \leqslant b_i $ to become active
simultaneously for the same variable $ x_i $.

The system of linear equations defining the Lagrange multipliers
$ \lambda_j $ in step 4 of Algorithm~\ref{alg: ineq} writes
\[ \sum_{j=1}^{m_1+m_2} \lambda_j\, \dual{\nabla g_i}{\nabla g_j} =
g_i - \eta\, \dual{\nabla g_i}{\nabla\! f} \,,\ 1\leqslant i\leqslant m_1+m_2 \]
We treat differently the first $ m_1 $ unknowns and the first $ m_1 $ equations :
\[ \begin{aligned}
\sum_{j=1}^{m_1} \lambda_j\, \dual{\nabla g_i}{\nabla g_j} \,{}+
\sum_{k=m_1+1}^{m_1+m_2} \lambda_k\, \dual{\nabla g_i}{\nabla g_k} &=
g_i - \eta\, \dual{\nabla g_i}{\nabla\! f} \,,& 1\leqslant i\leqslant m_1 \\
\sum_{j=1}^{m_1} \lambda_j\, \dual{\nabla g_l}{\nabla g_j} \,{}+
\sum_{k=m_1+1}^{m_1+m_2} \lambda_k\, \dual{\nabla g_l}{\nabla g_k} &=
g_l - \eta\, \dual{\nabla g_l}{\nabla\! f} \,,& m_1+1\leqslant l\leqslant m_1+m_2 
\end{aligned} \]
Since $ 1\leqslant i\leqslant m_1 $, we have that $ \nabla g_i = \pm e_i $,
$ e_i $ being the canonical basis in $ \RR^n $.
To fix ideas, we suppose that $ \nabla g_i = e_i $; since
$ 1\leqslant j\leqslant m_1 $, we have $ \nabla g_j = e_j $.
Also, note that $ g_i=0 $ (when the box-like constraints are activated,
a projection operation is performed, thus they are satisfied exactly)
Thus, the linear system writes
\[ \begin{aligned}
\lambda_i \,{}+ \sum_{k=m_1+1}^{m_1+m_2} \lambda_k\, g_{k,i} &=
- \eta\, f_{,i} \,,& 1\leqslant i\leqslant m_1 \\
\sum_{j=1}^{m_1} \lambda_j\, g_{l,j} \,{}+
\sum_{k=m_1+1}^{m_1+m_2} \lambda_k\, \dual{\nabla g_l}{\nabla g_k} &=
g_l - \eta\, \dual{\nabla g_l}{\nabla\! f} \,,& m_1+1\leqslant l\leqslant m_1+m_2 
\end{aligned} \]
By using the first $ m_1 $ equations, we easily express each $ \lambda_i $
in terms of $ \lambda_k $ ($ m_1+1\leqslant k \leqslant m_1+m_2 $).
By plugging these expressions into the second part of the system, we get
\[
- \sum_{j=1}^{m_1}\, \biggl(\sum_{k=m_1+1}^{m_1+m_2} \lambda_k\, g_{k,j} +
\eta\, f_{,j}\biggr)\, g_{l,j}
\,{}+ \sum_{k=m_1+1}^{m_1+m_2} \lambda_k\, \dual{\nabla g_l}{\nabla g_k} =
g_l - \eta\, \dual{\nabla g_l}{\nabla f} \,,\ m_1+1\leqslant l\leqslant m_1+m_2 
\]
and thus
\[ \sum_{k=m_1+1}^{m_1+m_2} \lambda_k\, \dual{\nabla g_l}{\nabla g_k} \,{}-
\sum_{k=m_1+1}^{m_1+m_2} \sum_{j=1}^{m_1} \lambda_k\, g_{k,j}\, g_{l,j} =
g_l - \eta\, \dual{\nabla g_l}{\nabla\! f} + \sum_{j=1}^{m_1} \eta\, f_{,j}\,g_{l,j} \,,
\ m_1+1\leqslant l\leqslant m_1+m_2 \]

By expanding the inner products between gradients, we rewrite the above as
\[ \sum_{k=m_1+1}^{m_1+m_2} \sum_{j=m_1+1}^{n} \lambda_k\, g_{k,j}\, g_{l,j} =
g_l - \eta\, \sum_{j=m_1+1}^{n} f_{,j}\,g_{l,j} \,,
\ m_1+1\leqslant l\leqslant m_1+m_2 \]
or, equivalently,
\[ \sum_{k=m_1+1}^{m_1+m_2} \lambda_k\, \dual{\nabla^*\! g_l}{\nabla^*\! g_k} =
g_l - \eta\, \dual{\nabla^*\! g_l}{\nabla^*\! f} \,,
\ m_1+1\leqslant l\leqslant m_1+m_2 \]
where the symbol $ \nabla^* $ denotes the gradient of the respective function
\emph{with respect to the last $ n-m_1 $ variables only}.
Also, the symbol $ \dual\cdot\cdot $ in the above system of linear equations
represents the inner product in $ \RR^{n-m_1} $ and not in $ \RR^n $ as
in the previous formulae.

Based on the above considerations, Algorithm \ref{alg: ineq} can be reformulated
as follows (we denote by $ \mathcal B $ the set of box-like constraints).

\begin{algorithm}\label{alg: box}~\\
 \noindent INPUT: initial guess $x^{(0)}$, step size $ \eta > 0 $,
 tolerance $ \epsi > 0 $, maximum number of iterations $N$. \\  
   \noindent OUTPUT: approximate solution $x$ or message of failure. \\
   \noindent\textbf{Step 1} Set $ \mathcal A = \varnothing $. 
     \emph{(no active constraints)} \\
   \noindent\textbf{Step 2} With $k$ from $1$ to $N$, do Steps 3--9. \\
   \indent\textbf{Step 3} With $i$ from $1$ to $m$, do \\
   \indent\indent\textbf{\phantom{Step 4}} If $ g_i(x^{(0)}) > 0 $ then set $ \mathcal A = \mathcal A \cup \{i\} $;
 \emph{(constraint $ g_i \leqslant 0 $ is being violated, thus we set it active)} \\
   \indent\indent\textbf{\phantom{Step 4}} If $ i\in \mathcal B $ then set $ x^{(0)}_i = a_i $ or $ b_i $;
 \emph{(we project)} \\
      \indent\textbf{Step 4} Compute $ \lambda_j $ ($ j \in \mathcal A\setminus\mathcal B $) by solving \\
 \indent\indent\indent $ \sum_{j \in \mathcal A\setminus\mathcal B} \lambda_j\, \dual{\nabla^*\! g_i(x^{(0)})}{\nabla^*\! g_j(x^{(0)})} =
       g_i(x^{(0)}) - \eta\, \dual{\nabla^*\! g_i(x^{(0)})}{\nabla^*\! f(x^{(0)})} $, $ i \in \mathcal A\setminus\mathcal B $. \\
      \indent\textbf{Step 5} Compute $ \lambda_j $ ($ j\in\mathcal A \cap\mathcal B $) as $ \lambda_j = - \eta\, f_{,j}(x^{(0)}) - \sum_{k\in\mathcal A\setminus\mathcal B} \lambda_k\, g_{k,j}(x^{(0)})  $ \\
      \indent\textbf{Step 6} Set $ i = \mathrm{arg\:min}_{j \in \mathcal A} \, \lambda_j $. \\
      \indent\textbf{Step 7} If $ \lambda_i < 0 $ then set $ \mathcal A = \mathcal A \setminus \{i\} $; 
  \emph{(constraint $ g_i \leqslant 0 $ is set inactive)} \\
      \indent\textbf{\phantom{Step 7}} \phantom{If $ \lambda_i < 0 $ then} GOTO Step 4. \\
      \indent\textbf{Step 8} Set $ x = x^{(0)} - \eta\, \nabla\! f(x^{(0)}) - \sum_{i \in \mathcal A} \lambda_i\, \nabla g_i(x^{(0)}) $.\\
      \indent\textbf{Step 9} If $ \| x - x^{(0)} \| < \epsi $ then OUTPUT($x$); \\
      \indent\textbf{\phantom{Step 9}} \phantom{If $ \| x - x^{(0)} \| < \epsi $ then} STOP. \\
      \indent\textbf{Step 10} Set $ x^{(0)} = x $. \\
      \noindent\textbf{Step 11} OUTPUT('The method failed after $N$ iterations.'); \\
      \noindent\textbf{\phantom{Step 11}} STOP.
    \end{algorithm}


Recall the tricky detail that ``blocked'' variables $ x_i $ with 
$ i\in \mathcal A \cap \mathcal B $
should be ignored when computing the scalar products between gradients
is step 4 of Algorithm \ref{alg: box}; 
they should also be left unchanged in step 8 since a projection has been
performed previously (in step 3).
In a word, those ``blocked'' variables must be treated
as if they were no longer variables but mere parameters,
equal to $ a_i $ or to $ b_i $.
However, Lagrange multipliers associated to constraints in 
$ \mathcal A \cap \mathcal B $
are meaningful (and are used in steps 6 and 7 to decide deactivation).


The technique above described has been successfully used in \cite{BLcompl}.

\subsection{Activation and deactivation strategies}
\label{sec: deactiv}

Different criteria for deactivating constraints (steps 5 and 6
of Algorithm~\ref{alg: ineq}, steps 6 and 7
of Algorithm~\ref{alg: box}) may be considered when
more than one Lagrange multiplier becomes negative at a certain
step of the algorithm.
For instance, one could deactivate at once all the constraints 
with negative multipliers instead of deactivating only the most negative one.

We suggest that two different situations should be distinguished.
In the first one, which we shall describe as \emph{discrete constraints}, 
there is a relatively small number of inequality constraints.
These constraints may be very different of each other.
They may have different physical nature, perhaps different physical units and
different orders of magnitude.
It makes no sense to compare their values, and it makes no sense to compare
their associated Lagrange multipliers.
Thus, there is no point in choosing the ``most negative'' multiplier, as
done in step 5 of Algorithm~\ref{alg: ineq} (step 6 of Algorithm~\ref{alg: box}).
We consider that in this case at most one Lagrange multiplier should become
negative at each step of the algorithm.
The event of more than one Lagrange multiplier becoming negative
at a certain step should be interpreted as a warning that the optimization
process is going too fast.
Perhaps the value of the parameter $ \eta $ should be decreased.
Thus, in this case, steps 5 and 6 of the Algorithm~\ref{alg: ineq}
(steps 6 and 7 of Algorithm~\ref{alg: box}) should be
reformulated in order to test whether more than one multiplier is negative,
and to take appropriate measures if this happens.

Actually, the above considerations apply also to the activation of
constraints.
If more than one constraint is violated at one step of the algorithm,
this again should be viewed as a warning that the optimization process
is going too fast.

In the second situation, there are many inequality constraints, very similar to
each other.
They share the same physical units and have the same order of magnitude.
In a word, they are comparable.
We shall call such a set of constraints an \emph{almost-continuum of constraints}
because this may appear, for instance, as a discretization of the continuum case
described in subsection~\ref{sec: contin}.
Note that this is not the same as the situation described in 
subsection~\ref{sec: box}, where there are many constraints but they are 
very different from each other (they constrain different variables $ x_i $).

In the case of an almost-continuum of constraints, 
even before discussing the deactivation strategy, we should
take a look at how activation is done, that is, at how constraints are violated.
Since there are many constraints, close to each other, if one of them is
violated we expect the ``neighbour'' ones, that is, ones which are similar to it,
to be violated, too.
So, ``group violations'' are to be expected in this second situation.
If this happens, there is no point in activating the whole group of constraints.
We propose that only the ``worst'' one should be activated, that is, the one
which becomes more positive (recall that the constraints are of the form
$ g_i \leqslant 0 $).
In subsequent steps, the algorithm will push the value of that ``worse''
$ g_i $ towards zero, and this will have the side effect of pushing also 
its ``neighbour'' constraints towards zero.

It is not an easy task to implement the above described ideas into a computer program.
Clusters of violated constraints must be identified and monitored along the optimization process.
Each cluster should have a ``leader'' (the most violated constraint in that
cluster) which is active.
By pushing the ``leader'' towards zero one hopes to control the behaviour of the
whole cluster.
Along the optimization process, clusters may merge or split, which makes this 
programming task really challenging.

The implementation of the above ideas is only possible if the (almost-continuous)
set of constraints has some internal organization which allows us to identify
the closest neighbours of a given constraint, like the one described in 
Remark \ref{rem: poisson ring} for instance.
Using this notion of vicinity, we suggest the following activation strategy.
Among the set of all violated constraints, one should only activate those which
are ``more violated'' than all of its neighbours (these will be the ``leaders'').
Along the optimization process, one should keep checking if the neighbours of
the ``leaders'' become ``more violated'' than the ``leader'' and 
switch the activation flag towards the ``most violated'' one.

Going back to the deactivation issue, we see that the case of an almost-continuum
of constraints is actually not very
different from the case of discrete constraints, since only the ``leaders'' 
of the clusters of constraints have been activated.
Thus, the active constraints continue to be few and ``far'' from each other
(that is, different), and the same deactivation strategy should be employed
as in the first situation.

Finally, note that a combination of the two situations (discrete constraints 
and almost-continuum constraints) may appear in some examples.
It is not difficult to adapt the algorithm in order to deal with such problems.
Even several almost-continua of constraints can be treated in the same manner.

\subsection{A continuum of constraints}
\label{sec: contin}

A true continuum of constraints may also be of interest for certain problems.
In the sequel, we shall use the notation $ g_\xi $ instead of $ g_i $,
$ \xi $ being a parameter indexing the family of constraints.

For instance, in the example studied in section~\ref{sec: Poisson coef}
it would be interesting to allow for any angle between 0 and 180 degrees
(this would give rise to a one-dimensional continuum of constraints).
Another example is the optimization of a structure subject to an incoming
wave; the wave may come from any direction of the plane and may have any 
frequency within a certain range
(this would be a two-dimensional continuum of constraints).
Often, the optimization of a structure subject to multiple loads also falls 
into this category.

This case of a continuum of constraints can be dealt with, 
more or less in the same manner as described
in subsections~\ref{sec: ineq} and \ref{sec: deactiv}.
The main novelty is that we should now actively seek for the worst case
(the most violated constraint) within a continuum, and this should be done
by means of a maximization algorithm.
This may be viewed as a dual approach: on one hand, we minimize (in $x$) 
a function $ f(x) $ while on the other hand we look for the worst case by 
maximizing (in $ \xi $) $ g_\xi(x) $.

Note that we have not implemented these ideas yet; the example given in 
Section~\ref{sec: Poisson coef} has been solved using the discrete approach
described in subsections~\ref{sec: ineq} and \ref{sec: deactiv}.

\subsection{Extension to minimax problems}
\label{sec: minimax}

We now turn to the problem of minimizing simultaneously a family of functionals.
More precisely, the goal is to minimize the maximum of several different functionals:
\[
\min_x \max\, \{ f_1(x), f_2(x), \dots, f_m(x) \},
\]
where all of the $f_i$ are smooth functions (making it tempting to try and bypass the
non-smoothness of the inner max-function in some way).
It is easy to re-write this problem into a form appropriate to be treated by
the method described in subsections~\ref{sec: ineq} and \ref{sec: deactiv}.
It suffices to introduce a new (fake) variable $z$ and minimize (in $x$ and
$z$) the function $ F(x,z) = z $ subject to the constraints $ f_i(x) \le z $,
$ i = 1, 2,\dots, m $.

The case of a continuous minimax problem can be dealt with in the same way.
The problem
\[
\min_x \max_\xi f_\xi(x)
\]
can be reformulated as the minimization of $ F(x,z) = z $
subject to $ f_\xi(x) \le z $, $ \forall\xi $.
Thus, the method described in subsection~\ref{sec: contin} can be applied.

\section{Application of the algorithm to the optimization of auxetic materials}
\label{sec: Poisson coef}

We now show an application of the minimization algorithm described in 
subsection~\ref{sec: minimax} to the optimization of macroscopic properties of
periodic microstructures, in the context of linearized elasticity.
With the goal of designing a composite material having negative Poisson ratio
along all directions, we use the algorithm for minimizing in simultaneous the
Poisson ratio of the composite along many directions of the plane ($ 10 $
directions in the first example, $ 18 $ directions in the second example).
These results have been presented in the preprint \cite{BarToa13}.

A composite material will be described as a periodic microstructure,
that is, a linearly elastic body whose material coefficients vary
at a microscopic scale, according to a periodic pattern.
Homogenization theory allows one to accurately describe the macroscopic
behaviour of such a microstructure by means of so-called cellular problems,
which are elliptic PDEs subject to periodicity conditions.
Porous materials, that is, bodies with periodic infinitesimal perforations,
can be described in a similar manner.


\subsection{The cellular problem}
\label{cell-prob}

We shall consider a model hole, which is a compact set $ T\subset Y $ 
(see Figure~\ref{fig:hole}), where $Y$ is the periodicity cell.
Usually, $Y$ is the unit cube in $ \RR^n $; see, however, \cite{Bar08a}
for a general notion of periodicity.

\begin{figure}[ht]
 \centering{\hspace{8cm}\includegraphics[scale=0.7]{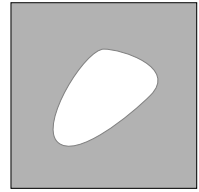} }
 \vspace{1cm}	
\caption{Periodicity cell with model hole (zoomed)}
\label{fig:hole}
\end{figure}

The perforated body is obtained by removing from $ \RR^n $ translations
of the model hole.
For a cubic cell $Y$, one has (see Figure~\ref{fig:holes90})
\begin{equation} 
\RR^n\perf = \RR^n \setminus \bigcup_{\vec k\in\ZZ^n} (T+\vec k) 
\end{equation}
\vspace{-1cm}
\begin{figure}[ht]
\centering{\hspace{5cm}\includegraphics[scale=0.7]{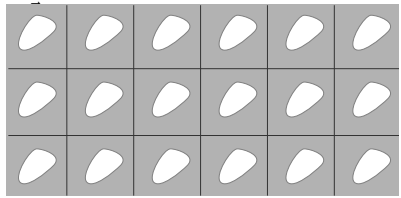} }
\vspace{1cm}
\caption{Periodically perforated plane $ \RR^2\perf$}
\label{fig:holes90}
\end{figure}

The cellular problem describing the behaviour of such
a porous material is:
\begin{equation}\label{eq:cell-pb-hole}
\left\{\begin{array}{rcl}
\hbox{find } u_{\tens A} (\vec x) & = \tens A \vec x + \phi_{\tens A}(\vec x), \\
\hbox{where } \phi_{\tens A}  & \hbox{ is a periodic function, and} \\
-\mbox{div}(\tens C \mathbf\epsi(\vec u_{\tens A})) & = \vec 0 \hbox{ in } \RR^n\perf
\\
 \tens C \mathbf\epsi(\vec u_{\tens A}) \vec n & = \vec 0 \hbox{ on } \partial T 
 \end{array}
\right.
\end{equation}

The above problem models the microscopic behaviour of
a microstructure with elastic tensor $ \tens C $,
occupying the domain $ \RR^2\perf $ and subject to 
the macroscopic strain $ \tens A $.
The homogenized elastic tensor $ \tens C^H \!\! $, describing the effective
(macroscopic) behaviour of this microstructure, is given by
\begin{equation}\label{eq:CHA-hole}
\tens C^H \tens A = \frac 1{|Y|} \int_{Y\setminus T} \tens C 
\mathbf\epsi (\vec u_{\tens A})
\end{equation}
or
\begin{equation}\label{eq:CHAB-hole}
\langle \tens C^H \tens A, \tens B\rangle = \frac 1{|Y|} 
\int_{Y\setminus T} \langle \tens C \mathbf\epsi (\vec u_{\tens A}),
\mathbf\epsi(\vec u_{\tens B}) \rangle \,,
\end{equation}
where $ \tens A $ and $ \tens B $ are given macroscopic strains.

The cellular problem (\ref{eq:cell-pb-hole}) can be
reformulated in stress, as follows (see \cite{Bar08a}) :

\begin{equation}\label{eq:stress-form-hole}
\left\{\begin{array}{rcl}
& \vec w_\sigma \in LP\perf ,\\
& -\mbox{div}(\tens C \mathbf\epsi(\vec w_\sigma)) = \vec 0 \hbox{ in } \RR^n\perf\\
& \tens C \mathbf\epsi(\vec w_\sigma) \vec n = \vec 0 \hbox{ on } \partial T \\
& \displaystyle\frac 1{|Y|} \int_{Y\setminus T} \tens C \mathbf\epsi (\vec w_\sigma) =\mathbf\sigma \,,
\end{array}\right.
\end{equation}
where $ \mathbf\sigma $ represents an applied macroscopic stress.

We shall denote by $\tens D^H$ the homogenized compliance tensor, that is, the
inverse of $\tens C^H$.

\subsection{Shape and topology derivatives}
\label{sh-top-deriv}

The effective elastic properties of the above described porous body
can be optimized by varying the size and shape of existing holes
in the periodicity cell $Y$, and also by creating new, infinitesimal, holes.

The first approach is called shape optimization (here applied at the
cellular level).
The shape derivative describes the variation of a certain
objective functional when an infinitesimal deformation is applied to
a given geometry.
Consider $ \vec\theta: \RR^n\to \RR^n $ a vector field defining the
deformation; note that $ \vec\theta $ itself should be periodic in order
to preserve the periodic character of the microstructure under study.
Then the variation induced by this deformation in the quantity
$ \langle \tens C^H \tens A, \tens B \rangle $ is (see \cite{Bar03}
and \cite[Section 6]{Bar08a})
\begin{displaymath}
D_S \langle \tens C^H \tens A, \tens B \rangle =
\frac 1{|Y|} \int_{\partial T} \langle \tens C \mathbf\epsi(\vec u_{\tens A}),
\mathbf\epsi(\vec u_{\tens B}) \rangle \, \vec\theta\cdot \vec n
\end{displaymath}
where $ \vec n $ is the unit vector normal to the boundary of the hole $T$
and pointing inside $T$.
Assuming that $ \tens C $ is a linear isotropic elastic tensor,
$ \tens C \xi = 2 \mu \xi + \lambda (\mbox{tr}\xi) I $,
the above formula becomes
\begin{displaymath}
D_S \langle \tens C^H \tens A, \tens B \rangle =
\frac 1{|Y|} \int_{\partial T} \bigl[ 2\mu \langle \mathbf\epsi(\vec u_{\tens A}),
\mathbf\epsi(\vec u_{\tens B}) \rangle + 
\lambda \hbox{tr} (\mathbf\epsi(\vec u_
{\tens A})) \hbox{tr} (\mathbf\epsi(\vec u_{\tens B})) \bigr] \, \vec\theta\cdot \vec n
\end{displaymath}
In particular, this gives the shape derivative of the homogenized
coefficients:
\begin{equation}\label{eq:DSCHAB}
D_S C^H_{ij}  = D_S \langle \tens C^H \tens f_i, \tens f_j \rangle 
 =\frac 1{|Y|} \int_{\partial T} \bigl[ 2\mu \langle \mathbf\epsi(\vec u_{\tens f_i}),
\mathbf\epsi(\vec u_{\tens f_j}) \rangle + 
\lambda \hbox{tr} (\mathbf\epsi(\vec u_{\tens f_i})) \hbox{tr} 
(\mathbf\epsi(\vec u_{\tens f_j})) \bigr] \, \vec\theta\cdot \vec n
\end{equation}
where $ (\tens f_i)_{i=1,2,3} $ is the following basis in the space of 
symmetric matrices
\begin{displaymath}
\tens f_1 = \left[\begin{array}{rcl}1\quad & 0 \\ 0\quad & 0 \end{array}\right] \,,
\ \tens f_2 = \left[\begin{array}{rcl}0\quad & 0 \\ 0\quad & 1\end{array}\right] \,,
\ \tens f_3 = \frac 1 {\sqrt 2} \left[\begin{array}{rcl}0\quad & 1 \\ 1\quad & 0\end{array}\right]
\end{displaymath}
and $ \vec u_{\tens f_i} $ are the corresponding solutions of
the cellular problem (\ref{eq:cell-pb-hole}) with
effective strain $ \tens f_i $.

A second approach for the optimization of a structure is
topology variation (here applied at the cellular level).
It consists in drilling an infinitesimal circular hole and imposing 
zero Neumann condition on the newly created boundary.
The topological derivative describes the infinitesimal variation thus
induced in the functional $ \langle \tens C^H \tens A, \tens B \rangle $,
and depends on the location $x$ of the new hole.
It can be proven (see \cite[Section 5]{Bar08a} and \cite{Toa11}) that the
topological derivative is given by
\begin{equation}\label{eq:DTCH-2d}
\begin{array}{rcl}
 D_T \langle\tens C^H \tens A, \tens B\rangle (x) & {}=
-\displaystyle\frac\pi{|Y|} \frac{\lambda+2\mu}{\lambda+\mu} \Bigl[4\mu 
\mathbf\epsi (\vec u_{\tens A}) \mathbf\epsi(\vec u_{\tens B}) + {}\\
{} & {}+\displaystyle\frac{\lambda^2+2\lambda\mu-\mu^2}\mu\, \mbox{tr}\, \mathbf\epsi(\vec u_{\tens A})
\,\mbox{tr}\, \mathbf\epsi(\vec u_{\tens B}) \Bigr] (x)
\end{array}
\end{equation}

In \cite{Bar08b}, an algorithm was proposed for optimizing the 
microgeometry of the hole(s) in the cellular problem, with the goal of
improving certain macroscopic properties of the porous microstructure
(which is a body with periodically distributed infinitesimal perforations).
The algorithm alternates shape variations with topology variations
until a certain convergence criterion is fulfilled.
The properties to be optimized include the effective bulk modulus, 
the effective response to shear and the effective Poisson coefficient 
(see \cite[Section 6]{Bar08b}).

Both shape and topology derivatives of the homogenized compliance tensor $\tens D^H$
are obtained from the derivatives of the homogenized tensor $\tens C^H$ by :
\begin{equation}\label{deriv D^H}
D_S \tens D^H_{ijkl} =-D^H_{ij\alpha\beta}\, D_S \tens C^H_{\alpha\beta\gamma\delta} \, D^H_{\gamma\delta kl}, \quad
D_T \tens D^H_{ijkl} =-D^H_{ij\alpha\beta} \, D_T \tens C^H_{\alpha\beta\gamma\delta} \, D^H_{\gamma\delta kl}.
\end{equation}
Note that formula (\ref{deriv D^H}) uses the coordinate notatin for the
fourth-order tensors $ \tens C^H $ and $ \tens D^H $, while in (\ref{eq:DSCHAB})
the indices $i$ and $j$ are relative to the basis $ (\tens f_i)_{i=1,2,3} $ in 
the space of symmetric matrices.

\subsection{Poisson ratios and the minimax technique}
\label{Poisson-ratios}

This work focuses on the search of two-dimensional periodic microstructures
exhibiting negative Poisson ratio at the macroscopic level 
(so-called auxetic materials).
In previous works of the same authors \cite{Bar08a}, \cite{Bar08b}, 
anisotropic effective elastic tensors have been obtained which exhibit 
negative Poisson ratio in a prescribed direction of the plane (the horizontal direction),
see Figure~\ref{fig:3micro}.
In the present work, we look for periodic microstructures with
the same negative Poisson ratio among all directions in the plane.
This is done by combining the techniques described in the above 
subsection~\ref{sh-top-deriv} (for shape optimization at the cellular level) 
with the minimax algorithm described in subsection~\ref{sec: minimax} 
which ensures that the largest Poisson ratio
among many directions in the plane is being minimized.

\begin{figure}
 \center{\hspace{0.7cm}\includegraphics[height=4.5cm]{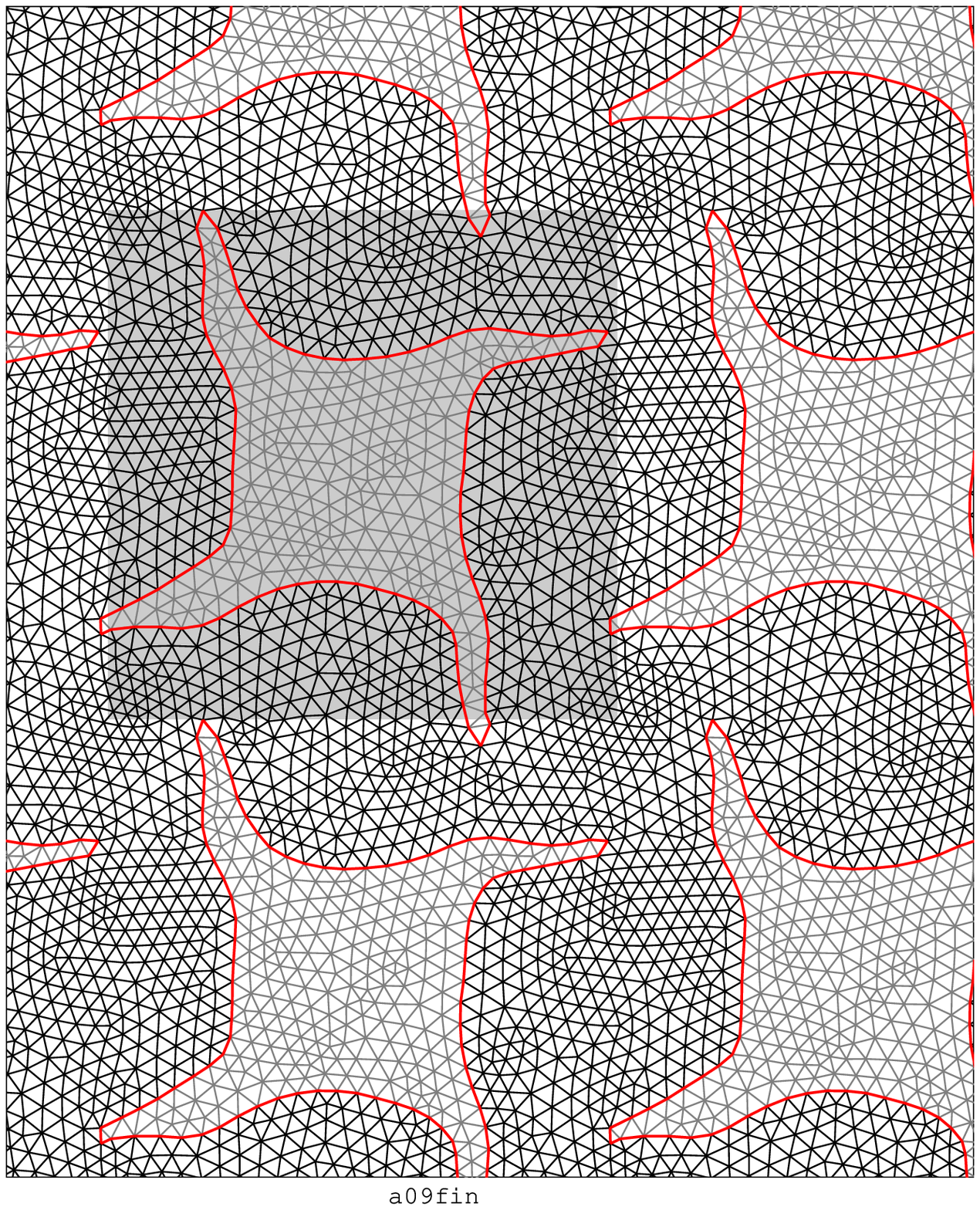}
\includegraphics[height=4.5cm]{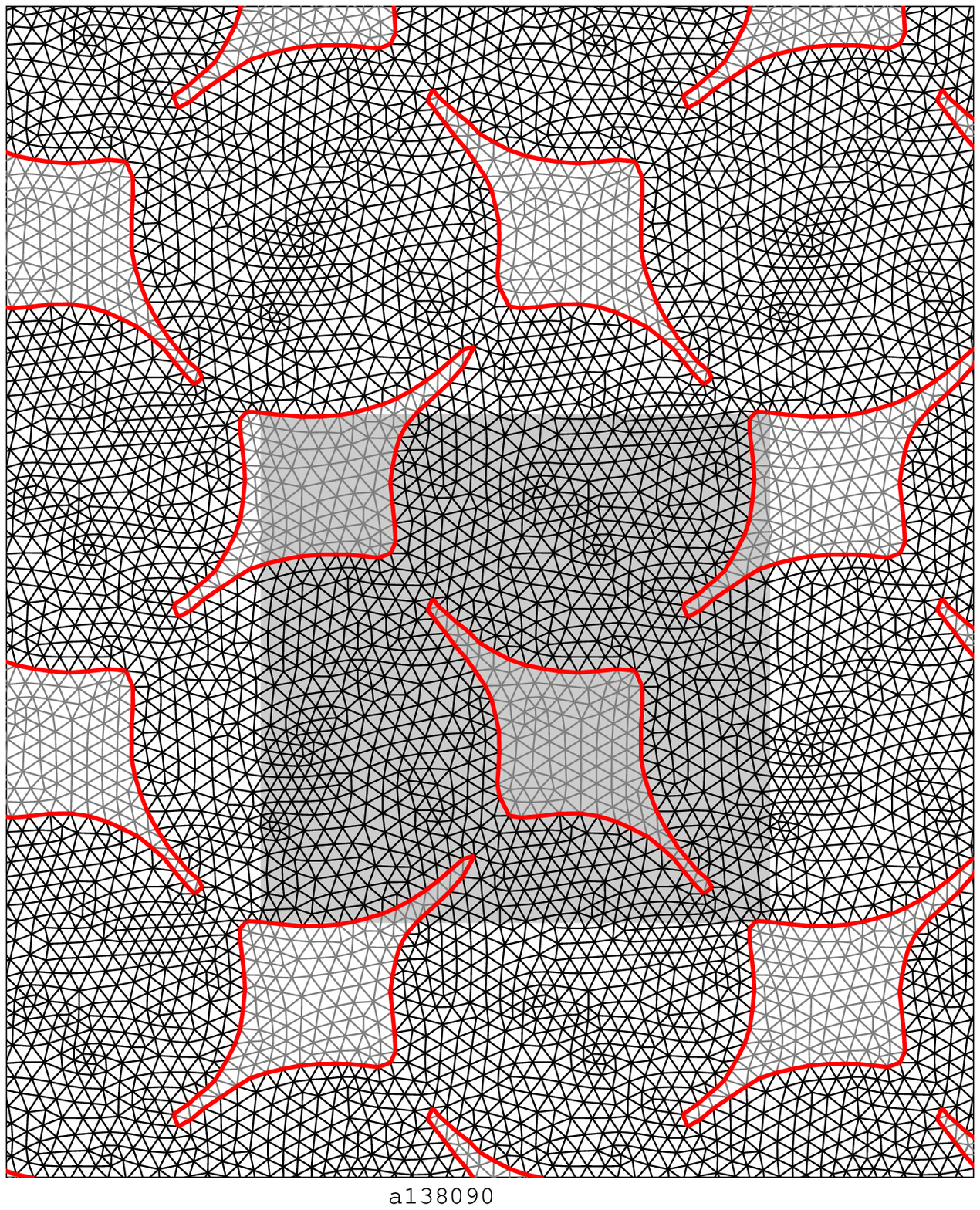}
\includegraphics[height=4.5cm]{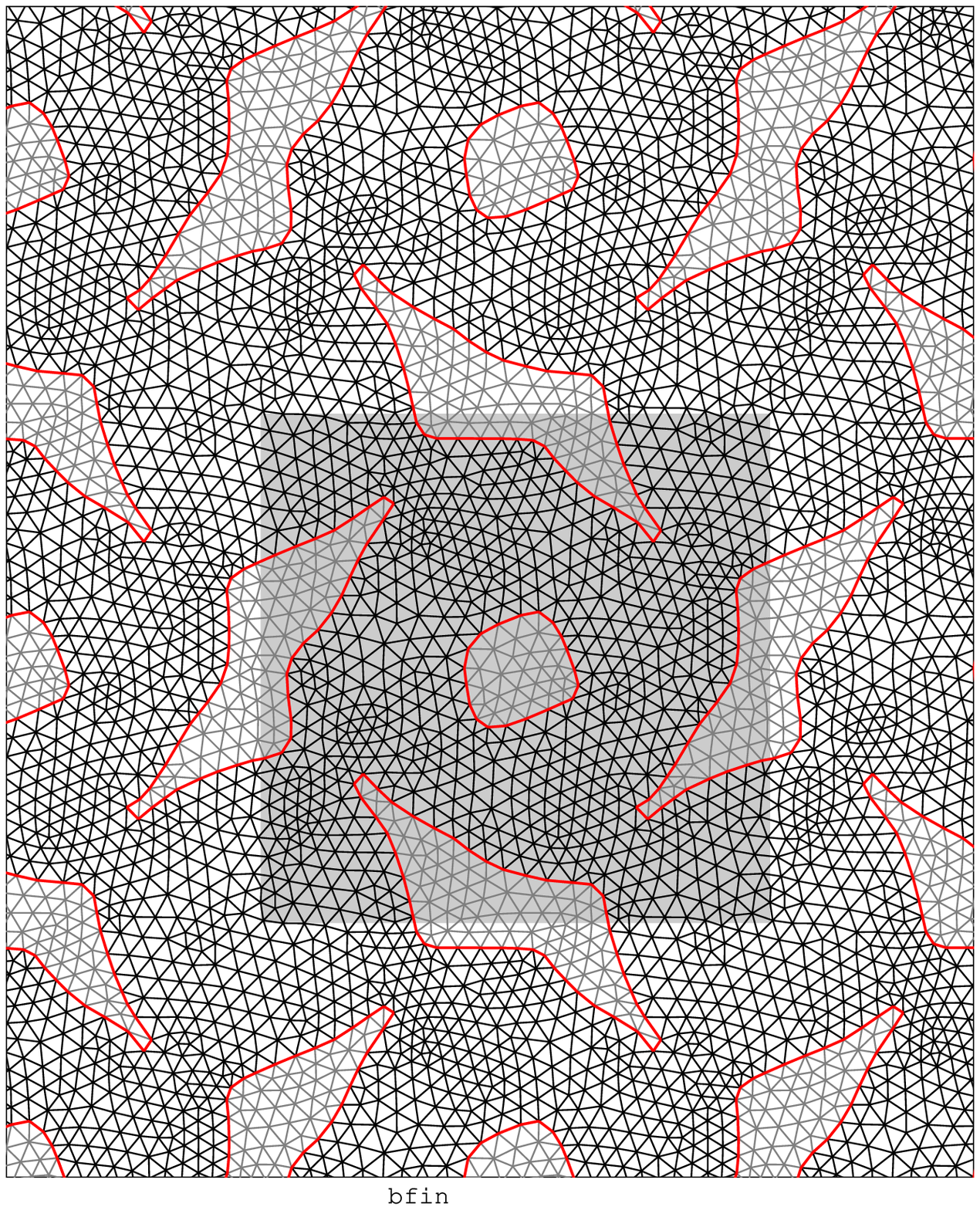} }
\vspace{1cm}
\caption{Optimized microstructures with respect to one direction only}
\label{fig:3micro}
\end{figure}

Note that the effective elastic tensor resulting from the homogenization
technique, defined by (\ref{eq:CHA-hole}) or (\ref{eq:CHAB-hole}),
is not isotropic in general.
Thus, the notion of Poisson coefficient must be defined with care : it is minus the 
ratio between the transverse strain and the axial strain when 
the material is stretched or compressed along the axial direction, see \cite{BSS93}.
In the two dimensional case under consideration, a Poisson ratio can be associated
to each unit vector $\vec{\rm v} = ({\rm v}_1, {\rm v}_2)$, arbitrarily chosen in the plane.
Consider a stretching stress applied along the direction of $\vec{\rm v}$; in the frame 
$\{\vec{\rm v}, \vec{\rm v}^\perp\}$ the stress
writes $\mathbf\sigma = \left(\begin{array}{rcl}1\quad & 0 \\ 0\quad & 0\end{array}\right)$. 
Then the Poisson ratio $\nu_{\rm v}$ in the direction $\vec {\rm v}$
is defined as $\nu_{\rm v}=-\displaystyle\frac{\varepsilon_{\perp \perp}}{\varepsilon_{\rm vv}}$, 
where $\mathbf\epsi_{\perp \perp}$
is the strain in the direction $\vec {\rm v}^\perp$ and $\mathbf\epsi_{\rm vv}$ is 
the strain in the direction $\vec {\rm v}$. 
The above defined stress $\mathbf\sigma$ expressed
in cartesian coordinates has the form 
\begin{equation}\label{sigma}
\mathbf\sigma =\left(\begin{array}{rcl}{\rm v}_1^2\quad & {\rm v}_1 {\rm v}_2 \\ {\rm v}_1 {\rm v}_2\quad & {\rm v}_2^2\end{array}\right)
\end{equation}
and the associated strain matrix is $\mathbf\epsi= \tens D^H \mathbf\sigma $ 
(recall that $\tens D^H$ is the homogenized compliance
tensor). Then the axial strain is
$\mathbf\epsi_{\rm vv}= \tens D^H \mathbf\sigma \, \vec {\rm v} \cdot \vec {\rm v} $ and 
the transverse strain is $\mathbf\epsi_{\perp\perp}= 
\tens D^H \mathbf\sigma \, \vec {\rm v}^\perp \cdot \vec {\rm v}^\perp$. The Poisson ratio writes as 
$\nu_{\rm v}=-\displaystyle\frac{\tens D^H \mathbf\sigma \, \vec {\rm v}^\perp \cdot \vec {\rm v}^\perp}
{\tens D^H \mathbf\sigma \, \vec {\rm v} \cdot \vec {\rm v}}$ and
introducing  the stress corresponding to a stretch in the direction $\vec {\rm v}^\perp$, denoted by
\begin{equation}\label{sigma.perp}
\mathbf\sigma^\perp =\left(\begin{array}{rcl}{\rm v}_2^2\quad & -{\rm v}_1 {\rm v}_2 \\ -{\rm v}_1 {\rm v}_2\quad & {\rm v}_1^2\end{array}\right),
\end{equation}
it becomes :
\begin{equation}\label{nu_v}
\nu_{\rm v}=-\displaystyle\frac{\langle \tens D^H \mathbf\sigma , \mathbf\sigma^\perp \rangle}{\langle \tens D^H \mathbf\sigma , \mathbf\sigma \rangle}.
\end{equation}
The derivative of $\nu_{\rm v}$ with respect to $\tens D^H$ is given by :
\begin{equation}\label{deriv nu_v}
\dfrac{\partial \nu_{\rm v}}{\partial \tens D^H_{ijkl}}=
-\dfrac{\sigma_{ij}\sigma^\perp_{kl}}{\langle \tens D^H \mathbf\sigma , \mathbf\sigma \rangle} 
+ \dfrac{\langle \tens D^H \mathbf\sigma , \mathbf\sigma^\perp \rangle}{\langle \tens D^H \mathbf\sigma , \mathbf\sigma \rangle^2} \sigma_{ij}\sigma_{kl}
\end{equation}
The above formulae  \eqref{nu_v} and \eqref{deriv nu_v} are suitable for implementation since they can actually be seen as depending 
on the homogenized tensor $\tens C^H$, see (\ref{deriv D^H}).

The minimax algorithm proposed in subsection~\ref{sec: minimax} will be applied to this problem.
The family of functionals to minimize simultaneously consists of all Poisson ratios associated to the effective elasticity tensor $ \tens C^H $
in many different directions of the plane. 
Specifically, we consider a (large but finite) set of directions $\vec {\rm v} $
in the plane and for each direction we associate the respective Poisson ratio $\nu_{\rm v}$ defined by \eqref{nu_v}, \eqref{sigma}
and \ref{sigma.perp}. Each Poisson ratio $\nu_{\rm v}$ is a functional of $\tens D^H$ which is the inverse tensor of 
$\tens C^H$ which in turn is function of the shape 
and topology of the perforations denoted by $T$ in subsection~\ref{cell-prob}.
The respective derivatives of these dependencies are given by formulas \eqref{deriv D^H} and \eqref{deriv nu_v}.

The algorithm will minimize the
largest Poisson ratios; this means that, after convergence, the effective 
elastic tensor thus obtained will have roughly the same Poisson ratio in 
all directions of the plane; also, if the process is successful, 
this Poisson ratio will be negative.
Note, however, that this does not ensure that $ \tens C^H $ is isotropic.

\begin{remark}\label{rem: poisson ring}
We use a (finite) family of directions $\vec {\rm v}$ indexed by an angle varying
from $ 0^{\circ} $ to $ 180^{\circ} $.
Using the terminology from subsection \ref{sec: deactiv},
this is an almost-continuous family of functionals organized as a ring
(0 degrees actually gives the same direction as 180 degrees).
Within this organization, each constraint has two closest neighbours.
\end{remark}

\subsection{Numerical implementation and numerical results}
\label{num.impl}

The algorithm used in this work is an improved version of our
home-made code, presented in \cite[Section 4]{Bar08b}.
The improvement consists in the addition of a minimax routine
which handles the optimization of the worst case among several 
functionals, as described subsection~\ref{sec: minimax}.

As explained in \cite[Section 4]{Bar08b}, in order to discretize
problem \eqref{eq:cell-pb-hole}, the microstructure is meshed with
triangular finite elements of Lagrange type of degree two.
Some of the triangles are marked as ``full'', corresponding to the
elastic material, while other triangles are marked as ``empty'',
corresponding to the hole $T$.
The interface between material and hole is marked in red (see Figure~\ref{fig:3micro}).
Note that, although for graphical purposes several contiguous cells are
represented, the mesh covers only one cell $Y$ and is ``closed'' in
itself, having no boundary.
It can be described as a mesh on the two-dimensional torus; 
the graphical representation in Figure~\ref{fig:3micro} refers to an unfolded mesh 
where vertices, segments and triangles are drawn more than once.

In order to implement the periodicity condition in \eqref{eq:cell-pb-hole},
linear+periodic functions are considered on this mesh 
(they can be identified with multi-functions on the torus).
This is done by keeping track of segments crossing the boundary of 
the cell $Y$ and by taking into account the jump of the function 
along those segments.

Along the optimization process, the mesh deforms in order for the holes
to change their shape. The deformation of the mesh is accomplished by
simply moving the vertices. However, this implies a gradual loss in
the quality of the mesh : sharp angles appear eventually, as well as
too long or too short sements. At some point, certain triangles may
even become flat or be reversed. Of course this must be prevented,
since it turns the process of solving problem \eqref{eq:cell-pb-hole}
by the finite element method ill posed and consequently unstable. 
With this end in view, the program
improves frequently the quality of the mesh, either by moving the vertices
(equilibrating the mesh) or by changing the elements of the mesh
(flipping segments, adding/eliminating vertices). See \cite[Section 5]{Bar08b}
for details.

For shape optimization, the integrands in \eqref{eq:DSCHAB} are computed.
These are scalar functions defined on the boundary of the holes and
depending on the solutions $ u_{\tens f_j} $ of three cellular problems
($ j=1,2,3 $).
A functional $J$ is chosen which depends on the homogenized coefficients
$ C^H_{ij} $ (as explained in subsection~\ref{Poisson-ratios}, here we actually
consider several functionals to be minimized simultaneously, the Poisson ratios).
The shape derivative of $J$ is computed as
\begin{displaymath}
D_S J = \sum_{ij} \frac{\partial J}{\partial C^H_{ij}}\, D_S C^H_{ij}
= \int_{\partial T} \gamma\, \vec\theta\cdot \vec n
\nonumber \end{displaymath}
where $ \gamma $ is a scalar function computed as a linear combination of
the integrands in (\ref{eq:DSCHAB}).
If a steepest descent method were used, one should choose a deformation of the interface $ \partial T $
equal to $ \vec\theta = -\gamma \vec n $ (multiplied by some positive constant $ \eta $ which
controls the speed of the process) in order to decrease the value of $J$.
Here, we use instead the method described in subsection \ref{sec: minimax} in order
to minimize simultaneously several functionals, and the shape derivative of
each of the Poisson ratios is used in Algorithm \ref{alg: box} accordingly.
The algoritm provides a desired deformation of the interface $ \partial T $, 
which is then propagated into the whole mesh by means of an averaging
process (see \cite[Section 5]{Bar08b}).
\vspace{-1cm}
\begin{figure}[ht]
\center{\hspace{1.5cm}\includegraphics[height=6cm]{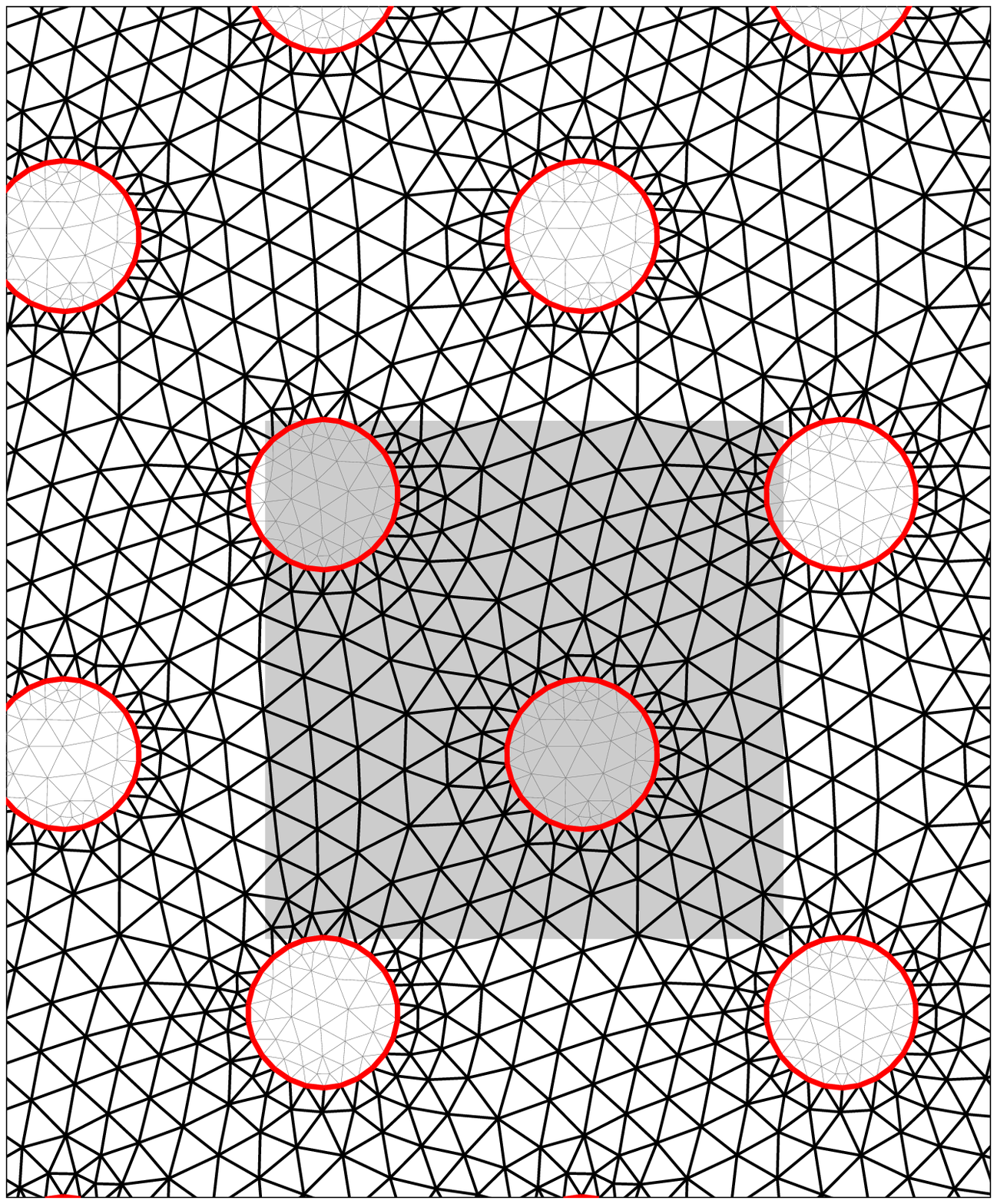} \,\,
\includegraphics[height=6cm]{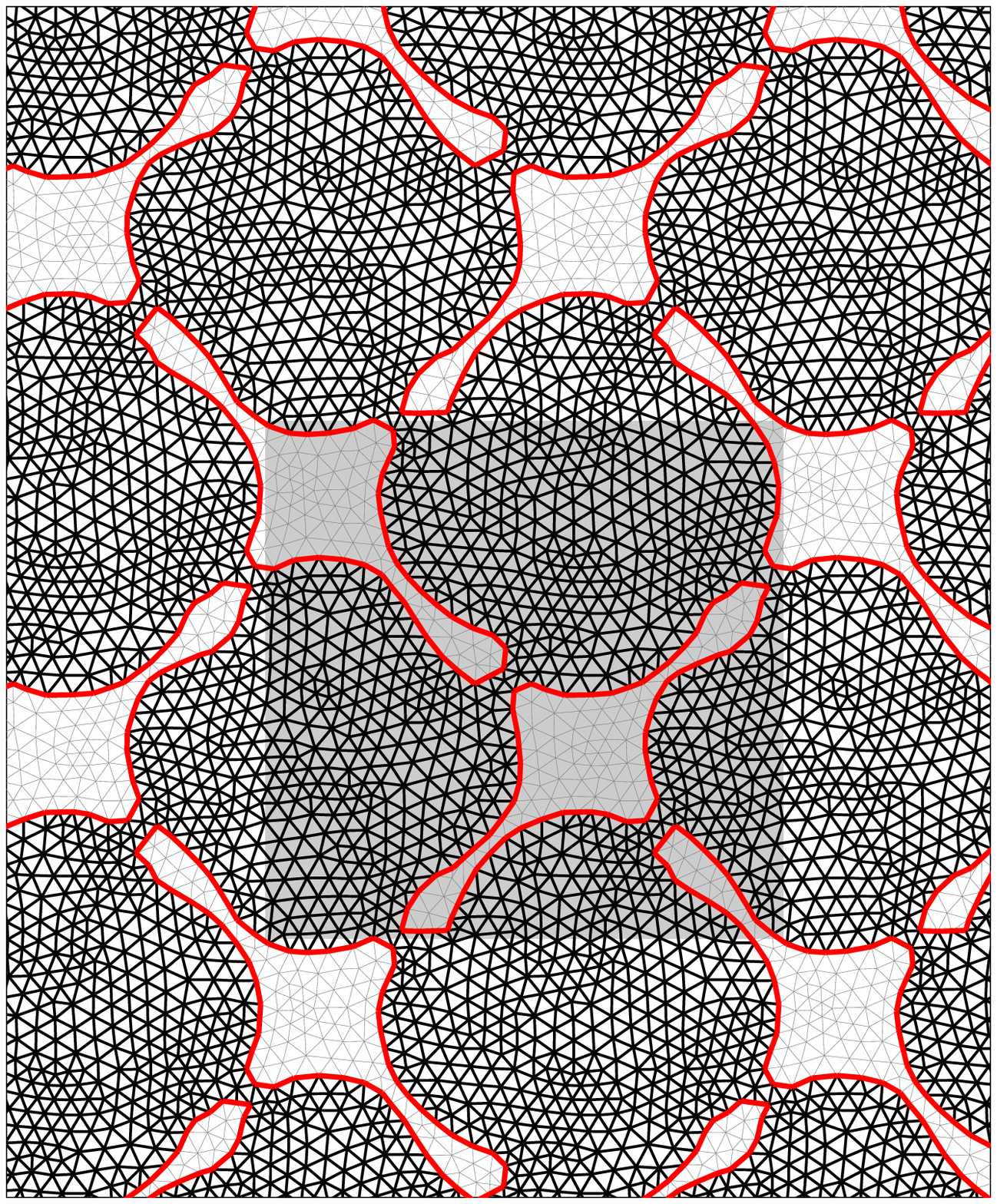}}
\vspace{1.5cm}
\caption{Initial guess and final microstructure for square periodicity}
\end{figure}
\begin{figure}
\center{\includegraphics[width=3in]{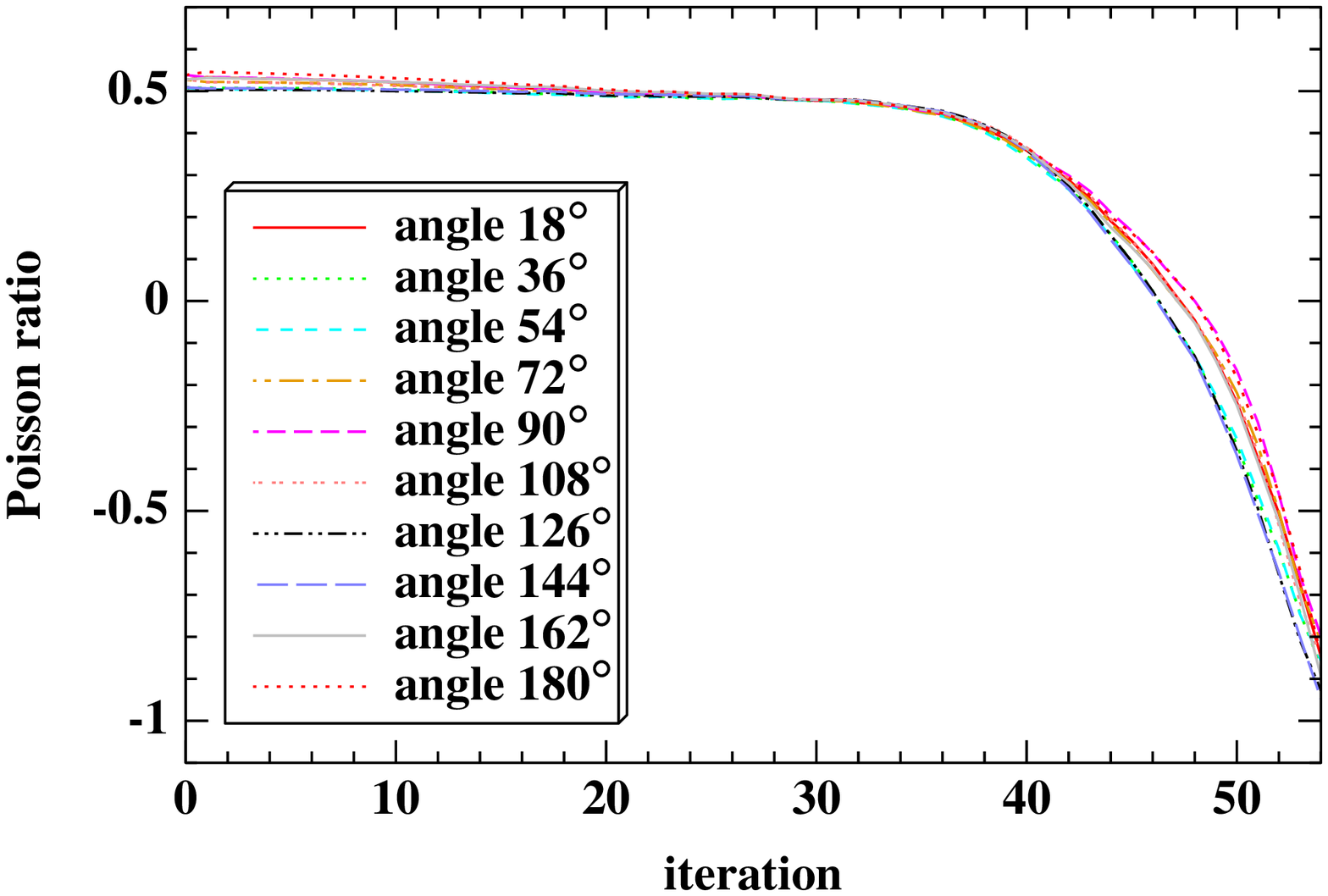}
\includegraphics[width=3in]{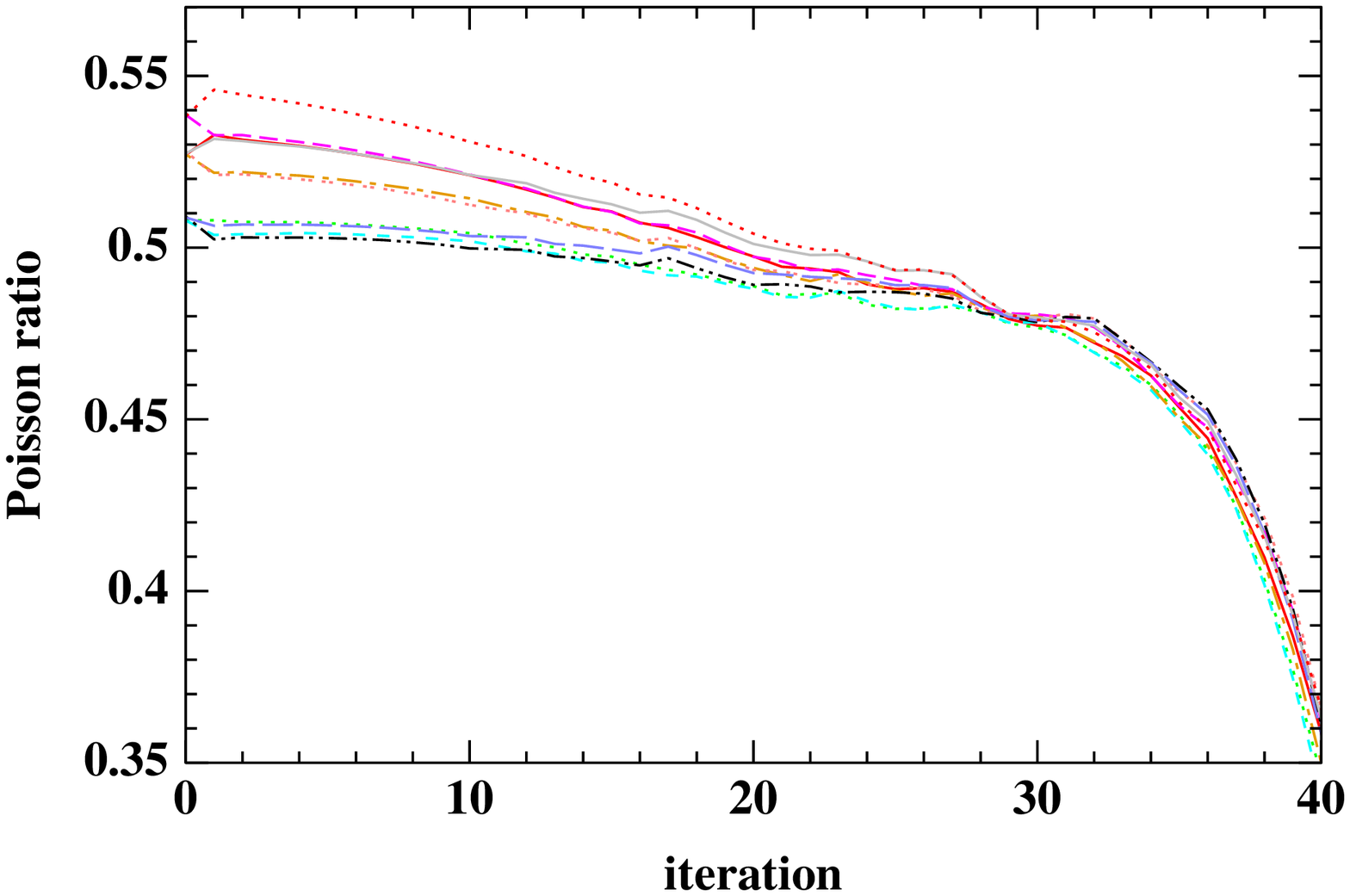}
\includegraphics[width=3in]{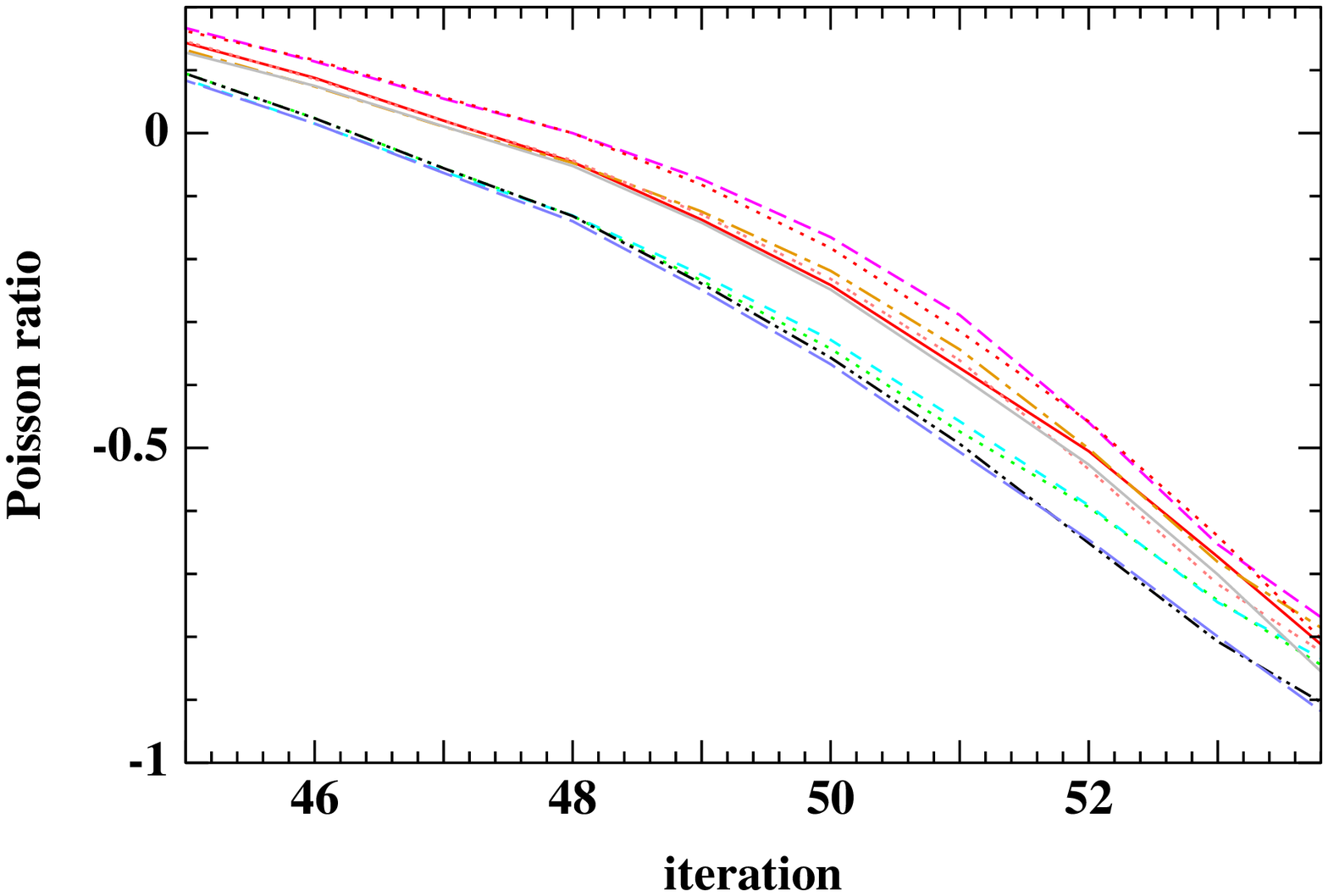}}
\caption{History of convergence, zoom of the first 40 iterations and zoom of the last 6 iterations}
\end{figure}

Topology optimization can be performed by simply changing the triangles
neighbour to a certain vertex from material to void.
However, in the present paper we focus only on shape optimization,
which means that we begin the optimization process with a certain
number of holes in the periodicity cell, and this number is going to remain
constant up to the end of the optimization process.

In the first example we optimize the Poisson ratios along ten directions in the plane, at angles uniformly 
distributed between 0 and 180 degrees. The largest Poisson ratio among those ten directions is minimized.
The initial microstructure (initial guess), see Figure~4, has a square periodicity, property that does not vary during 
the optimization process. It presents two model holes that repeat periodically. 
The algorithm, after 54 iterations, produced the microstructure on the right in Figure~4, still
with square periodicity and still presenting two model holes. At iteration 47 the Poisson ratios became negative
and the final design presents Poisson ratios less then -0.7. In the history of convergence, see Figure~5, 
one can observe that in the first iterations the algorithm makes the ten Poisson ratios as close as possible
and afterwards it decreases them all together. 

In the second example eighteen directions in the plane are chosen, at angles uniformly 
distributed between 0 and 180 degrees. The largest Poisson ratio among those directions is minimized.
The initial guess has a hexagonal periodicity and presents one model hole repeated periodically (with respect to
the hexagonal periodicity), see Figure~6. After 60 iterations the algorithm produces the microstructure presented
in Figure~6, on the right. The history of convergence in shown in Figure~7, with two zoom-in views.
The final design has Poisson ratios close to -0.9 in all eighteen directions.
\begin{figure}[ht]
\center{\hspace{2cm}
\includegraphics[height=5.5cm]{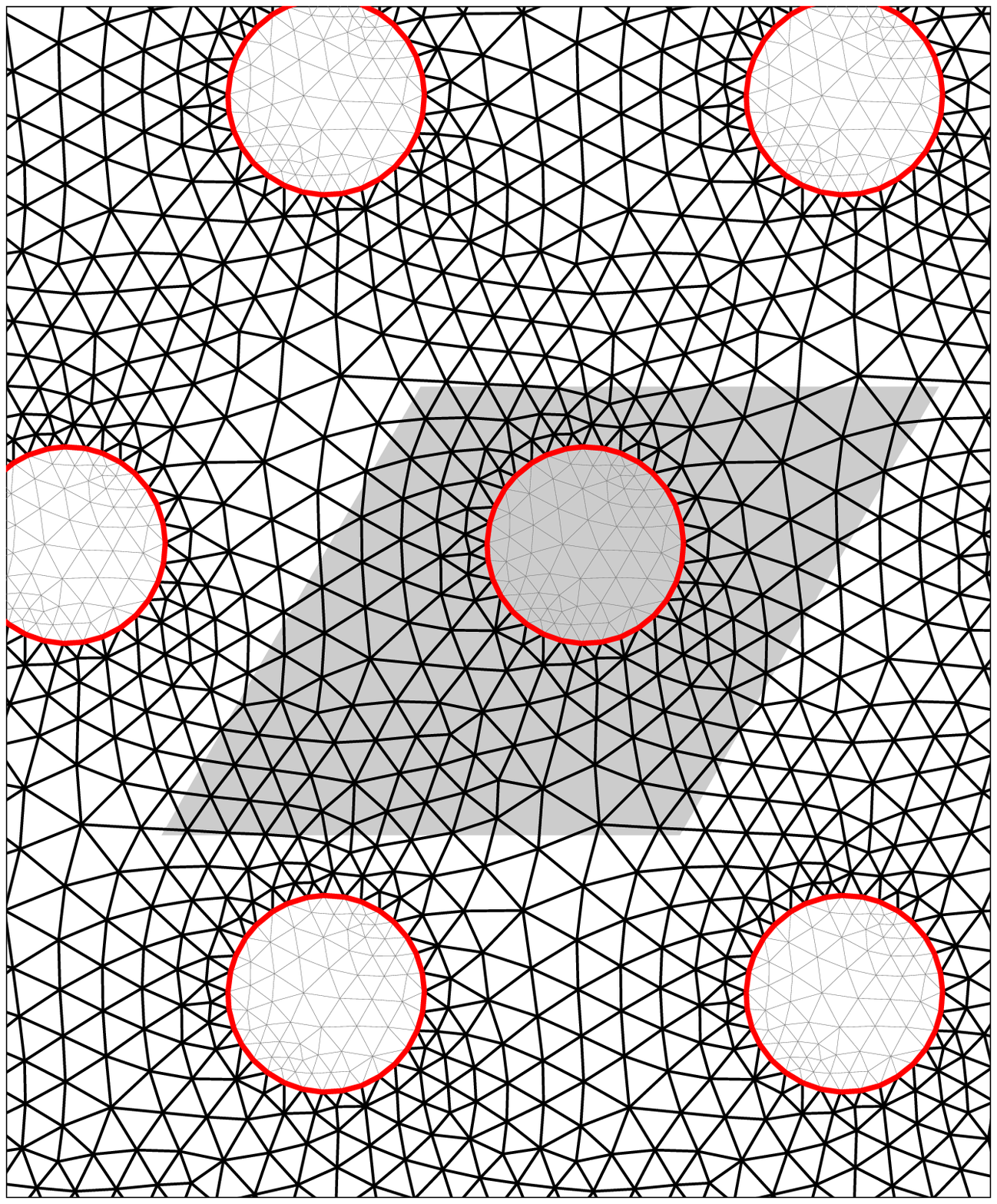}
\includegraphics[height=5.5cm]{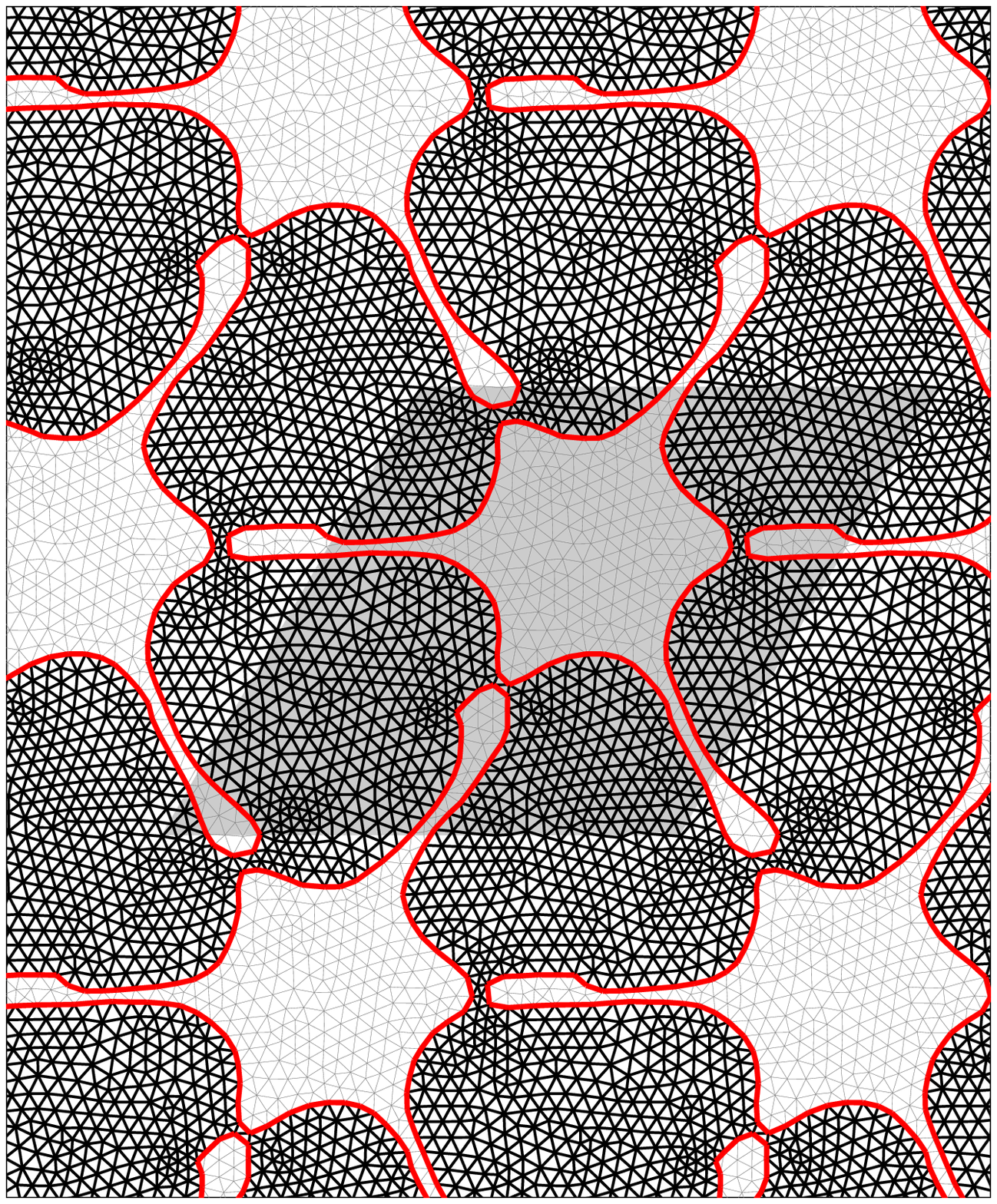}}
\vspace{1.5cm}
\caption{Initial guess and final microstructure for hexagonal periodicity}
\end{figure}
\begin{figure}
\center{\includegraphics[width=4in]{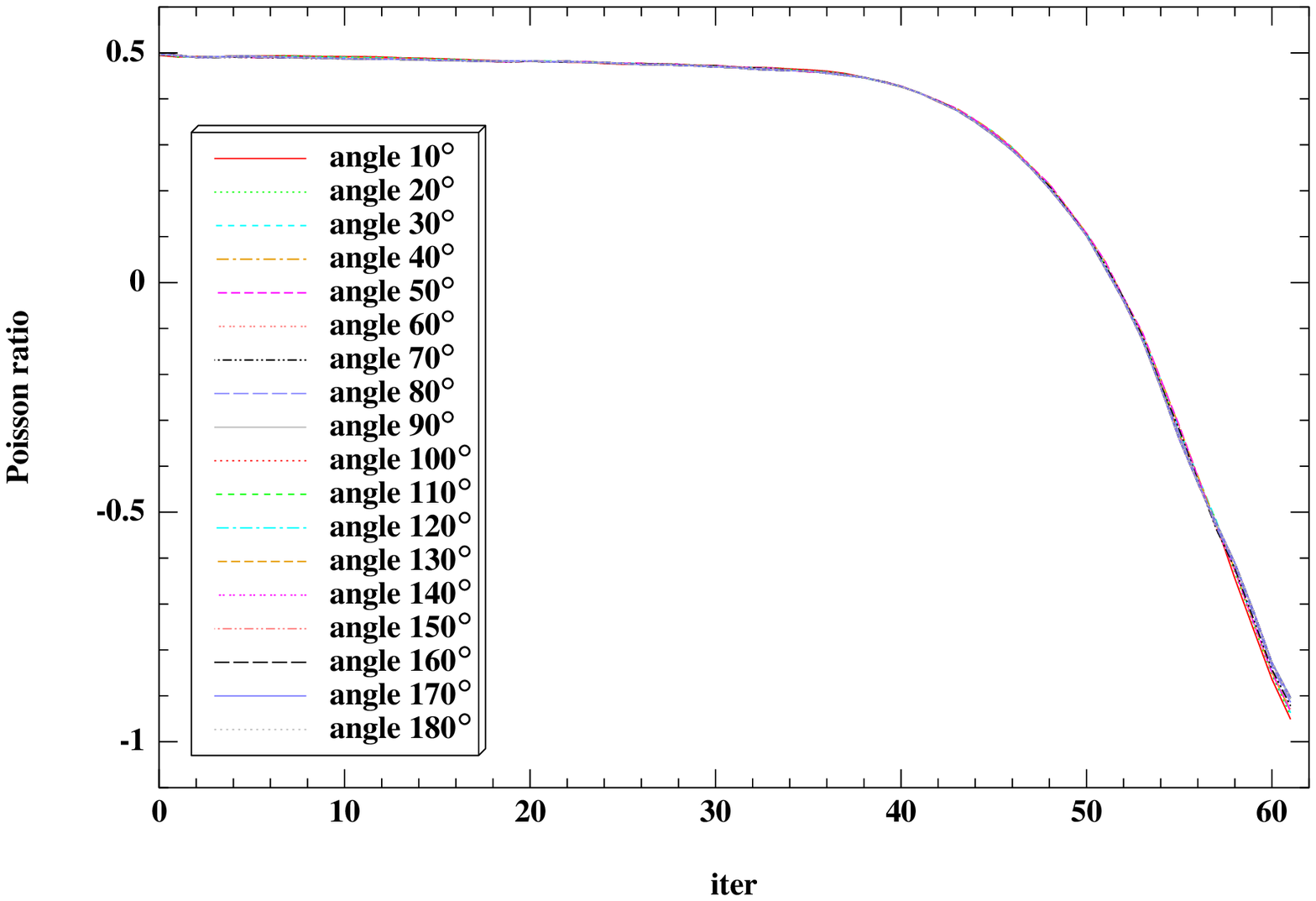}}
\center{\includegraphics[width=3in]{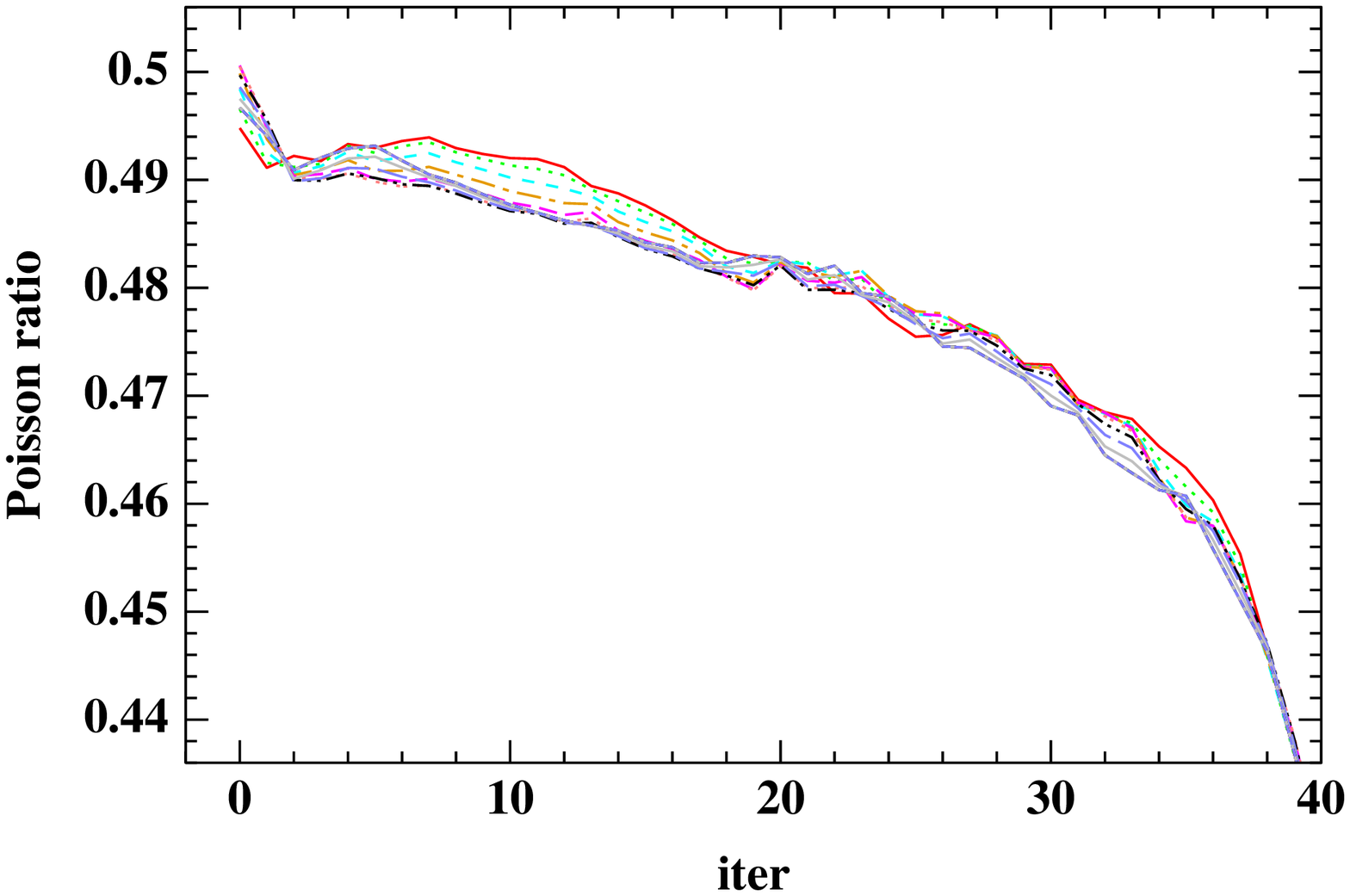}
\includegraphics[width=3in]{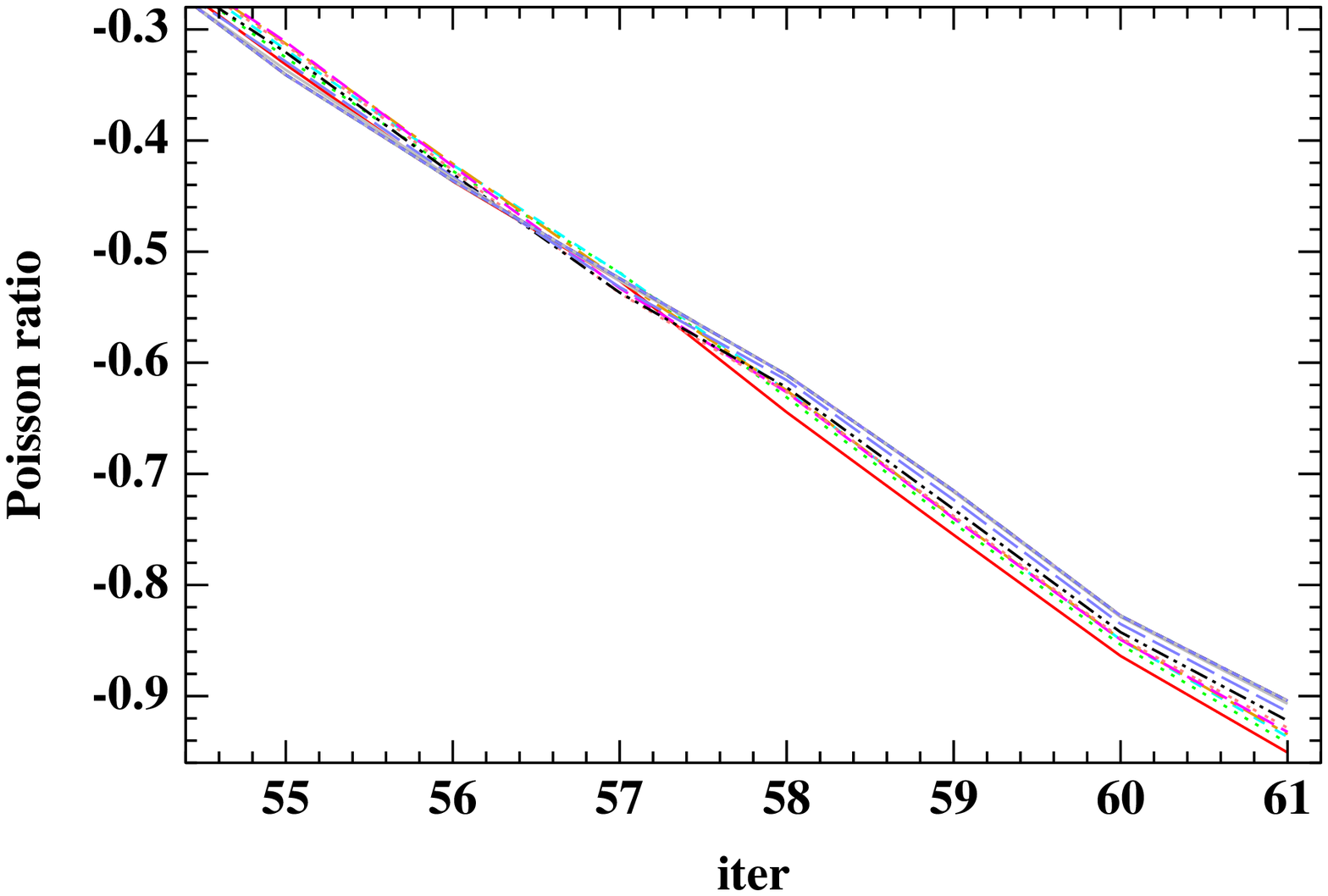}}
\caption{History of convergence, zoom of the first 40 iterations, zoom of the last 8 iterations}
\end{figure}

\section{Conclusions and future development}
\label{conclusion}

An algorithm for optimization with equality constraints
is presented and proven to be convergent.
It can be regarded as a gradient method applied in
the direction tangent to the manifold determined by the constraints, 
together with a Newton method applied in the orthogonal direction. 
This method, although not very fast (it has linear convergence) 
is quite natural, easy to implement, and has the advantage of requiring
solely the first derivatives of the objective and of the constraint
functions. 

Generalizations of this algorithm are described in order to address inequality
constrained problems also, as well as minimax problems.
Criteria for activation and deactivation of constraints are discussed in some
detail, as well as the special case of box-like constraints.
An approach to solve problems involving a (continuous) infinite family
of inequality constraints is also discussed.

An application to optimization of periodic microstructures,
for obtaining homogenized elastic tensors with negative Poisson ratio,
is presented.
It uses shape and/or topology variations in the model hole that
characterizes the microstructure.
Multi-objective optimization is employed in order to minimize 
the Poisson ratio of the homogenized elastic tensor 
in several prescribed directions of the plane, in order to obtain 
a material having roughly the same negative Poisson ratio in all directions.

It is possible to obtain periodic 2D microstructures with Poisson ratio
close to $-1$ and roughly the same in all directions of the plane.
The examples show that the algorithm tends to ``cut'' the structure,
so the bulk modulus approaches zero.

Future work includes the study of three-dimensional microstructures,
the treatment of a continuum of constraints 
and its extension to continuous minimax problems.
When put in the context of optimization of microstructures, the capacity to
deal with a continuum of constraints would allow one to impose as a constraint 
that the composite material be truly isotropic.

\section*{Acknowledgement}

This work was supported by Funda\c c\~ao para a Ci\^encia e a
Tecnologia, UID/MAT/04561/2013.

	\appendix


  \section{Unconstrained optimization}
  \label{sec: unconstr}

    Unconstrained optimization is the minimization or maximization of a 
    scalar function $f$
    defined on the whole $ \RR^n $. 
		


    Minimization algorithms require the user to supply a starting point, which will be denoted by $x^{(0)}$.
    Then, at each iteration, the algorithm chooses a direction
    $\delta^{(k)}$ and searches along this direction, from the current iterate $x^{(k)}$, for a new iterate with a lower
    function value. The distance to move along $\delta^{(k)}$, the \emph{step length}, is commonly chosen after a finite
    number of trial step lengths; this strategy is known as \emph{line search} . This
    kind of procedure is useful for obtaining convergence from ``remote'' initial approximations $x^{(0)}$, which
    is not our main concern. Besides, once a locally convergent algorithm has been devised, with the step length taken
    to be constant throughout, one can always modify it to encompass line search in order to enhance its convergence
    properties (this is usually the order things are done anyway).

    It is quite natural to look for a \emph{descent direction}, that is, a direction $\delta^{(k)}$ such that 
    $ \dual{\nabla f(x^{(k)})}{\delta^{(k)}} < 0 $, where $ \dual{\phantom{i}}{\phantom{i}} $ denotes the usual dot product 
    in $\RR^n$.
    The \emph{steepest descent direction} $ \delta^{(k)} = - \nabla f(x^{(k)}) $ is the most obvious choice.
    This \emph{steepest descent method} has the advantage of requiring the calculation of
    first derivatives only, but it can be quite slow.

    The following standard results will be used 
    (they can be easily found in textbooks on Functional Analysis):
    \begin{theorem}[\textbf{Banach fixed-point theorem}]
      Assume that $K$ is a nonempty closed set in a Banach space $E$ (with norm $\|\phantom{x}\|$), and further, that
      $ S : K \to K $ is a contractive mapping (\textit{i.e.}, a Lipschitzian mapping with Lipschitz constant $L$
      strictly lower than one). Then there exists a unique $ x^\ast \in K $ such that $ x^\ast = S(x^\ast) $ and, for
      any $ x^{(0)} \in K $, the sequence $(x^{(k)})$ defined by $ x^{(k+1)} = S(x^{(k)}) $, $ k \in \NN_0 $, stays in $K$ and
      converges to $x^\ast$. Furthermore, the following estimate holds: $ \|x^{(k)} - x^\ast\| \leqslant
      L^k \|x^{(0)} - x^\ast\| $, for all $ k \in \NN_0 $.
    \end{theorem}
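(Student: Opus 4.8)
The plan is to prove the four assertions --- existence of a fixed point, its membership in $K$, uniqueness, and the contraction estimate --- by the classical iteration argument, exploiting completeness of $E$ together with the contractivity constant $L < 1$.

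First I would fix an arbitrary starting point $x^{(0)} \in K$ and observe that the entire orbit stays in $K$ because $S$ maps $K$ into $K$, so the recursion $x^{(k+1)} = S(x^{(k)})$ is well defined. The key quantitative step is to control consecutive differences: since $S$ is $L$-Lipschitz, $ \|x^{(k+1)} - x^{(k)}\| = \|S(x^{(k)}) - S(x^{(k-1)})\| \leqslant L\, \|x^{(k)} - x^{(k-1)}\| $, and iterating downward gives $ \|x^{(k+1)} - x^{(k)}\| \leqslant L^k \|x^{(1)} - x^{(0)}\| $. From here, for $m > n$ the triangle inequality combined with the geometric series yields $ \|x^{(m)} - x^{(n)}\| \leqslant \sum_{j=n}^{m-1} \|x^{(j+1)} - x^{(j)}\| \leqslant \frac{L^n}{1 - L}\, \|x^{(1)} - x^{(0)}\| $, which tends to zero as $n \to \infty$. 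Hence $(x^{(k)})$ is a Cauchy sequence.

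Next I would invoke completeness of $E$ to obtain a limit $x^\ast$, and closedness of $K$ to guarantee $x^\ast \in K$. Because a contraction is in particular continuous, passing to the limit in $x^{(k+1)} = S(x^{(k)})$ gives $x^\ast = S(x^\ast)$, which establishes existence. For uniqueness, if $y^\ast$ is another fixed point then $ \|x^\ast - y^\ast\| = \|S(x^\ast) - S(y^\ast)\| \leqslant L\, \|x^\ast - y^\ast\| $, and since $L < 1$ this forces $x^\ast = y^\ast$. Finally, the stated estimate follows by applying the one-step contraction with $x^\ast$ in place of a predecessor: $ \|x^{(k)} - x^\ast\| = \|S(x^{(k-1)}) - S(x^\ast)\| \leqslant L\, \|x^{(k-1)} - x^\ast\| $, so induction delivers $ \|x^{(k)} - x^\ast\| \leqslant L^k \|x^{(0)} - x^\ast\| $.

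There is no serious obstacle here, as this is the standard contraction mapping principle; the only point deserving attention is that \emph{both} hypotheses on the ambient space must be used --- completeness of $E$ to produce the limit of the Cauchy sequence, and closedness of $K$ to keep that limit inside $K$, so that $S(x^\ast)$ is defined and the fixed-point equation makes sense.
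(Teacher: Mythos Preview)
Your proof is correct and is exactly the classical argument for the contraction mapping principle. Note, however, that the paper does not actually prove this theorem: it is stated in the appendix as a standard result, with the remark that it ``can be easily found in textbooks on Functional Analysis,'' so there is no proof in the paper to compare against.
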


    \begin{corollary}\label{cor: contractivity}
      Let $ S : E \to E $ be a continuously Fr\'echet differentiable operator and $ x^\ast \in E $ a point such that
      $ S(x^\ast) = x^\ast $. If the Fr\'echet derivative of $S$ at $ x^\ast $ has operator norm strictly lower than one,
      then the conclusions of the previous theorem hold with $ K = \{ x \in E : \; \|x - x^\ast\| \leqslant r \} $, for
      some $ r > 0 $. In the finite dimensional case $ E = \RR^n $, this is equivalent to the requirement that
      $ \| DS(x^\ast) \| < 1 $ for some natural norm.\footnote{A matrix norm that is associated with a vector norm is
      called a \emph{natural norm}.}
    \end{corollary}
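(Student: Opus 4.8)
The plan is to show that the stated hypotheses force $S$ to be a genuine contraction of a small closed ball around $x^\ast$ into itself, after which the Banach fixed-point theorem applies verbatim and delivers every conclusion. First I would exploit the continuity of the Fréchet derivative, which is exactly what the assumption ``continuously Fréchet differentiable'' provides. Writing $L = \|DS(x^\ast)\| < 1$, I pick any $q$ with $L < q < 1$; since $x \mapsto \|DS(x)\|$ is continuous, there exists $r > 0$ such that $\|DS(x)\| \le q$ for every $x$ in the closed ball $K = \bar B_r(x^\ast)$. This $r$ will be the radius claimed in the statement.

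The second step is to verify the two properties required by the fixed-point theorem on this $K$. Both follow from the mean value inequality for Fréchet-differentiable maps: because $K$ is convex, for any $x, y \in K$ the whole segment $[x,y]$ lies in $K$, so the derivative bound $q$ holds along it and
\[
\|S(x) - S(y)\| \le \Big( \sup_{z \in [x,y]} \|DS(z)\| \Big)\, \|x - y\| \le q\, \|x - y\|.
\]
Taking $y = x^\ast$ and using $S(x^\ast) = x^\ast$ gives $\|S(x) - x^\ast\| \le q\,\|x - x^\ast\| \le q r < r$, so $S$ maps $K$ into itself; the same estimate for general $x,y$ exhibits $S$ as Lipschitz on $K$ with constant $q < 1$, i.e.\ contractive. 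Since $K$ is a nonempty closed subset of the Banach space $E$, the Banach fixed-point theorem then yields at once the existence and uniqueness of the fixed point in $K$, the convergence of the iterates from any starting point in $K$, and the geometric error bound.

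For the finite-dimensional remark I would observe that when $E = \RR^n$ the derivative $DS(x^\ast)$ is an ordinary matrix and its operator norm is, by definition, a natural norm. As all norms on $\RR^n$ are equivalent and the convergence assertion is itself norm-independent, it suffices that $\|DS(x^\ast)\| < 1$ hold for \emph{some} natural norm; invoking Remark~\ref{obs: radius norms}, such a norm exists precisely when $\rho(DS(x^\ast)) < 1$, which is the form of the hypothesis actually used in the applications (and in particular in Theorem~\ref{th: convergence}).

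The main obstacle is really the mean value inequality in the Banach-space setting: unlike the scalar mean value theorem there is no exact equality, so one must obtain the displayed estimate either by integrating $t \mapsto DS\big(x^\ast + t(x - x^\ast)\big)$ along the segment or by the Hahn--Banach functional trick. The one point that needs care is that the segment of integration must never leave the region where the bound $\|DS\| \le q$ is valid; this is guaranteed by the convexity of the ball $K$, and is the reason a ball (rather than an arbitrary neighbourhood) is the natural choice for the set on which the theorem is applied.
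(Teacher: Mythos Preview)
Your argument is correct and is exactly the standard textbook proof: bound $\|DS\|$ by $q<1$ on a small closed ball using continuity of the derivative, invoke the mean value inequality on the convex ball to obtain both self-mapping and contractivity, and then apply the Banach fixed-point theorem. Nothing is missing.

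Note, however, that the paper does not actually supply a proof of this corollary at all: it is introduced in the appendix under the heading ``the following standard results will be used (they can be easily found in textbooks on Functional Analysis)'' and is simply stated. So there is no ``paper's own proof'' to compare against; your write-up is precisely the kind of routine verification the authors chose to omit. One minor remark: your closing appeal to Remark~\ref{obs: radius norms} is fine logically (the spectral-radius fact quoted there is independent of the corollary), but since that remark in turn cites the corollary, you might prefer to state the underlying fact directly---for any matrix $A$ and $\varepsilon>0$ there is a natural norm with $\|A\|<\rho(A)+\varepsilon$---rather than cross-reference, to avoid the appearance of circularity.
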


    The classical local convergence result for the steepest descent method is now presented. 
    The assumptions, as well
    as the proof, are somewhat different than the usual ones 
    encountered in most of the literature, in the sense that
    we regard the method as a fixed-point iteration.
 This choice suits best our reasoning for the convergence proof 
 in subsection~\ref{sec: convergence}.
   \begin{theorem}\label{th: descent}
      Assume that $f$ is a twice continuously differentiable function whose 
      Hessian matrix $ D^2\! f(x^\ast) $ at a local minimizer
      $ x^\ast $ is positive definite. 
      Then there exists $ r > 0 $ such that, given $ x^{(0)} \in \bar{B}_r(x^\ast) = \{ x
      \in \RR^n : \; \|x - x^\ast\|_2 \leqslant r \} $, 
      the steepest descent method $ x^{(k+1)} = x^{(k)} - \eta\, \nabla\!
      f(x^{(k)}) $, $ k \in \NN_0 $, converges linearly to $ x^\ast $ 
      for sufficiently small step lengths $ \eta > 0 $.
    \end{theorem}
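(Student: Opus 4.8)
The plan is to recast the steepest-descent iteration as a fixed-point iteration and then invoke Corollary~\ref{cor: contractivity}. I would define the map $S : \RR^n \to \RR^n$ by $S(x) = x - \eta\, \nabla\! f(x)$, so that the method reads $x^{(k+1)} = S(x^{(k)})$. Since $f$ is twice continuously differentiable, $S$ is continuously (Fr\'echet) differentiable; and because $x^\ast$ is a local minimizer, the first-order condition $\nabla\! f(x^\ast) = 0$ gives $S(x^\ast) = x^\ast$, so that $x^\ast$ is a fixed point of $S$. By Corollary~\ref{cor: contractivity}, it then suffices to produce a natural norm in which $\| DS(x^\ast) \| < 1$.

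The next step is to compute the Jacobian. Differentiating yields $DS(x) = I - \eta\, D^2\! f(x)$, so at the minimizer $DS(x^\ast) = I - \eta\, H^\ast$, where $H^\ast = D^2\! f(x^\ast)$. This matrix is symmetric, being a Hessian of a $C^2$ function, and positive definite by hypothesis; let $0 < \mu_1 \leqslant \dots \leqslant \mu_n$ denote its eigenvalues. The eigenvalues of $DS(x^\ast)$ are then exactly $1 - \eta\, \mu_i$, so that $\rho(DS(x^\ast)) = \max_{1 \leqslant i \leqslant n} | 1 - \eta\, \mu_i |$.

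The decisive quantitative step is the choice of step length. For every $\eta$ with $0 < \eta < 2/\mu_n$ one has $-1 < 1 - \eta\, \mu_i < 1$ for all $i$, and hence $\rho(DS(x^\ast)) < 1$; this explicit threshold is exactly what is meant by ``sufficiently small step lengths''. Because $DS(x^\ast)$ is symmetric, its spectral radius coincides with its $\ell^2$ operator norm, giving $\| DS(x^\ast) \|_2 = \rho(DS(x^\ast)) < 1$ directly, without even needing the perturbed natural norms supplied by Remark~\ref{obs: radius norms} (which would be the route for a general, non-symmetric Jacobian).

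Finally, Corollary~\ref{cor: contractivity} furnishes a radius $r > 0$ such that $S$ maps $\bar{B}_r(x^\ast)$ into itself contractively, and the Banach fixed-point theorem then yields, for every $x^{(0)} \in \bar{B}_r(x^\ast)$, convergence of the iterates to $x^\ast$ together with the geometric estimate $\| x^{(k)} - x^\ast \| \leqslant L^k \| x^{(0)} - x^\ast \|$ with $L < 1$, which is precisely linear convergence. The only genuinely delicate point is upgrading the spectral bound, which holds a priori only at the single point $x^\ast$, to a true contraction on a full ball; this passage relies on the continuity of $DS$ and is already packaged inside Corollary~\ref{cor: contractivity}, so no separate argument is required. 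The substantive content of the proof therefore reduces to the elementary eigenvalue computation and the explicit bound $\eta < 2/\mu_n$.
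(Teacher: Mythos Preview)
Your proof is correct and follows essentially the same route as the paper's: cast the iteration as a fixed-point map $S(x)=x-\eta\,\nabla f(x)$, compute $DS(x^\ast)=I-\eta\,D^2 f(x^\ast)$, use symmetry to identify the $\ell^2$ norm with the spectral radius, and bound the eigenvalues to get the threshold $\eta<2/\mu_{\max}$. The only cosmetic difference is that the paper orders the eigenvalues decreasingly (so the bound is written $2/\mu_1^\ast$), whereas you order them increasingly and write $2/\mu_n$; the content is identical.
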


    \begin{proof}[\bf \textit{Proof}]
      Taking $ S : \RR^n \to \RR^n $ defined as $ S(x) = x - \eta\, \nabla\! f(x) $, 
      the steepest descent method becomes
      $ x^{(k+1)} = S(x^{(k)}) $, $ k \in \NN_0 $. 
      Since we are not interested in proving global convergence, the
      contractivity property will not be needed in all of $\RR^n$, but only locally near $x^\ast$. 
      By Corollary~\ref{cor: contractivity}, it suffices to check that
      $ \| D S(x^\ast)\| < 1 $ for some natural norm.

      It is clear that $ D S(x^\ast) = I - \eta\, D^2\! f(x^\ast) $ 
      is a symmetric matrix; then we know that the $\ell_2$ norm of 
      $ D S(x^\ast)$ coincides with the spectral radius of this
      same matrix (see \cite[Section 1.4]{PC90}). 
      The eigenvalues of $ D S(x^\ast)$ take the form $ 1 - \eta\, \mu^\ast_i $
      ($1 \leqslant i \leqslant n$), where $ \mu^\ast_1 \geqslant \cdots 
      \geqslant \mu^\ast_n $ are the eigenvalues of $ D^2\! f(x^\ast) $; 
      given that the latter are all positive, 
      we have $ 1 - \eta\, \mu^\ast_i \in [1-\eta\,\mu^\ast_1,
      1-\eta\,\mu^\ast_n] $ ($ 1 \leqslant i \leqslant n $) and 
      the choice $ 0 < \eta < \frac{2}{\mu^\ast_1} $ implies
      that $ [1-\eta\,\mu^\ast_1, 1-\eta\,\mu^\ast_n] \subset \; ]-1,1[ $. 
      Hence, one gets $ \| D S(x^\ast) \|_2 =
      \rho(DS(x^\ast)) $ strictly lower than one.
    \end{proof}

  \section{Optimization under equality constraints}
  \label{sec: constr}

Constrained optimization can be viewed as the superposition of a miniminization
problem and a non-linear equation.
Besides the \emph{objective function} $ f:\RR^n \to \RR $ that we want to minimize, 
a \emph{constraint function} $ g:\RR^n\to \RR^m $ is given defining certain
equations that the unknown vector $x$ must satisfy. 
Thus, the optimization problem can be written 
(considering only equality constraints) as :
    \[ \min_{x \in \M} f(x), \qquad \M = \{ x \in \RR^n : \; g(x) = 0
        \}. \tag{$\mathcal P$} \] 

Constrained optimization problems arise from models involving, for instance,
budgetary limitations or other specifications on the design.
Unconstrained optimization problems arise directly in many practical applications,
and also as reformulations of constrained ones, 
if the constraints are replaced by
penalization terms added to the objective function and having 
the effect of discouraging violations of the constraints, or by other means 
(\textit{e.g.} by parametrizing the set $\M$).
    
    \begin{definition}\label{regular point}
      A point $ x \in \RR^n $ satisfying the constraint $ g(x) = 0 $ is said to be a \emph{regular point} if
      the gradient vectors $ \nabla g_1(x), \nabla g_2(x), \ldots, \nabla g_m(x) $ are linearly
      independent. In other words, the Jacobian matrix $ Dg(x) $ should have full rank (equal to $m$).
    \end{definition}

    Note that at a regular point $x$ the constraint function $g$ 
    is a submersion, giving $\M$ the appropriate geometrical concept, 
    namely that of a submanifold of $\RR^n$; 
    the tangent subspace to $\M$ is given by 
    $ \mathcal{T}_x = \{ \tau \in \RR^n : \; Dg(x)\, \tau = 0 \} $.
    Note also that $ m < n $; in fact, $ m \geqslant n $ 
    would yield a discrete set of feasible points, 
    a situation which is outside the scope of the present paper.

    The \emph{optimality conditions} for constrained optimization problems
    are more complicated than for the unconstrained case.

    \begin{theorem}
    If $ x^\ast \in \RR^n $ is a solution of ($\mathcal P$) and 
    $ x^\ast $ is a regular point, then there exists a unique 
    $ \lambda^\ast \in \RR^m $ (called the \emph{Lagrange multiplier}) 
    such that the following conditions hold:
      \begin{equation}\label{th: kkt}
		\begin{cases} \nabla\! f(x^\ast) + \nabla g(x^\ast)\, \lambda^\ast = 0, \\ g(x^\ast) = 0. \end{cases}
	\end{equation}
    In  coordinate notation:
      \[ \begin{cases} f_{,j}(x^\ast) + \sum_{i=1}^m \lambda_i^\ast\, g_{i,j}(x^\ast) = 0,
        \: 1 \leqslant j \leqslant n, \\ g_i(x^\ast) = 0, \: 1 \leqslant i \leqslant m.
      \end{cases} \]
    \end{theorem}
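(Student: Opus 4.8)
The plan is to separate the two assertions in \eqref{th: kkt}. The second, $g(x^\ast)=0$, is immediate: a solution of $(\mathcal P)$ lies in $\M$ by definition. The whole content therefore resides in the stationarity condition $\nabla\! f(x^\ast) + \nabla g(x^\ast)\,\lambda^\ast = 0$, which I would first reformulate geometrically as the requirement that $\nabla\! f(x^\ast)$ be orthogonal to the tangent subspace $\mathcal{T}_{x^\ast} = \{\tau \in \RR^n : Dg(x^\ast)\,\tau = 0\}$. Thus the proof splits into two parts: (a) showing that $\dual{\nabla\! f(x^\ast)}{\tau} = 0$ for every $\tau \in \mathcal{T}_{x^\ast}$, and (b) converting this orthogonality into the existence and uniqueness of $\lambda^\ast$ by a linear-algebra argument.

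For step (a), I would exploit the fact, noted right after Definition~\ref{regular point}, that regularity makes $g$ a submersion at $x^\ast$, so that $\M$ is locally a smooth submanifold of $\RR^n$ of dimension $n-m$. Concretely, given any $\tau \in \mathcal{T}_{x^\ast}$, I would use the implicit function theorem to produce a $C^1$ curve $c : (-\epsi,\epsi) \to \M$ with $c(0) = x^\ast$ and $c'(0) = \tau$, feasibility $g(c(t)) = 0$ being guaranteed by construction. Since $x^\ast$ minimizes $f$ over $\M$, the scalar map $t \mapsto f(c(t))$ has a local minimum at $t=0$, whence $\frac{d}{dt} f(c(t))\big|_{t=0} = \dual{\nabla\! f(x^\ast)}{\tau} = 0$. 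As $\tau$ was arbitrary in $\mathcal{T}_{x^\ast}$, this yields $\nabla\! f(x^\ast) \perp \mathcal{T}_{x^\ast}$. I expect this curve construction to be the main obstacle, and it is precisely where the regularity hypothesis is indispensable: without full rank of $Dg(x^\ast)$ one cannot guarantee that every tangent direction is realized by an admissible curve (the set of admissible directions could be strictly smaller than $\mathcal{T}_{x^\ast}$), and the conclusion may then fail.

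For step (b), I would invoke the orthogonal decomposition $\RR^n = \mathcal{T}_{x^\ast} \oplus \mathcal{T}_{x^\ast}^{\perp}$. Since $\mathcal{T}_{x^\ast} = \mathrm{Ker}\,Dg(x^\ast)$, its orthogonal complement is the range of $\nabla g(x^\ast)$ (which, in the paper's convention, is the transpose of $Dg(x^\ast)$), i.e.\ the span of $\nabla g_1(x^\ast),\dots,\nabla g_m(x^\ast)$. The orthogonality established in step (a) then places $\nabla\! f(x^\ast) \in \mathrm{Ran}(\nabla g(x^\ast))$, so there exists $\lambda^\ast \in \RR^m$ with $\nabla\! f(x^\ast) = -\nabla g(x^\ast)\,\lambda^\ast$, which is the first line of \eqref{th: kkt}. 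Uniqueness follows directly from regularity: the columns $\nabla g_i(x^\ast)$ are linearly independent, so $\nabla g(x^\ast)$ has trivial kernel as a map $\RR^m \to \RR^n$ and the representation of $-\nabla\! f(x^\ast)$ is unique. The coordinate form is then obtained by reading \eqref{th: kkt} componentwise.
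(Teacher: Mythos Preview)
Your proof is correct and follows the standard textbook route: reduce to orthogonality of $\nabla\! f(x^\ast)$ to the tangent space via curves in $\M$ built from the implicit function theorem, then read off existence and uniqueness of $\lambda^\ast$ from the identity $\mathcal{T}_{x^\ast}^\perp = \mathrm{Ran}\,\nabla g(x^\ast)$ and the full rank of $Dg(x^\ast)$.

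There is, however, nothing to compare against: the paper does not prove this theorem. It is stated in Appendix~\ref{sec: constr} as background (the classical Karush--Kuhn--Tucker conditions), with the surrounding text making clear that such results are taken from standard references; only the comment ``These equations are often referred to as \emph{Karush--Kuhn--Tucker conditions} \ldots\ They are necessary for optimality, but not sufficient'' follows the statement. Your argument would serve perfectly well as the omitted proof.
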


    These equations are often referred to as \emph{Karush-Kuhn-Tucker conditions}
    or \emph{KKT conditions} for short. 
    They are necessary for optimality, but not sufficient.

    A sufficient optimality condition can be given by the action of the 
    Hessian matrices $ D^2\! f(x^\ast) $ and $ D^2 {g_i}(x^\ast) $ of 
    $f$ and $g_i$ ($ 1 \leqslant i \leqslant m $), respectively, 
    over tangent vectors to $\M$ at $x^\ast$. 
    This sufficient optimality condition involves also the values of
    the Lagrange multipliers $ \lambda^\ast_i $ ($ 1 \leqslant i \leqslant m $).
    The following result can be found in many textbooks on optimization
    (see, for instance, \cite[Section 11.4]{BS03}).

    \begin{theorem}\label{th: existence}
      Suppose there are $ x^\ast \in \RR^n $ and $ \lambda^\ast \in \RR^m $ 
      such that the KKT conditions \eqref{th: kkt} hold.
      Suppose also that the matrix $ H^\ast = D^2\! f(x^\ast) + 
      \sum_{i=1}^m \lambda^\ast_i\, D^2 {g_i}(x^\ast) $ is positive
      definite on $\mathcal{T}_{x^\ast}$, that is, for any nonzero
      vector $ \tau $ tangent to $ \M $,
      there holds $ \dual{H^\ast \tau}{\tau} > 0 $. 
      Then $x^\ast$ is a strict local minimizer of ($\mathcal P$).
    \end{theorem}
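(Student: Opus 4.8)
The plan is to reduce the constrained problem to an unconstrained one by introducing the Lagrangian $ L(x) = f(x) + \dual{\lambda^\ast}{g(x)} $, and then to argue by contradiction using a second-order Taylor expansion. Two features of $L$ follow immediately from the hypotheses and will drive everything. First, the gradient condition in \eqref{th: kkt} says precisely that $ \nabla L(x^\ast) = \nabla\! f(x^\ast) + \nabla g(x^\ast)\,\lambda^\ast = 0 $, so $x^\ast$ is a critical point of $L$; and since $\lambda^\ast$ is a fixed vector, the Hessian is $ D^2 L(x^\ast) = D^2\! f(x^\ast) + \sum_{i=1}^m \lambda^\ast_i\, D^2{g_i}(x^\ast) = H^\ast $. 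Second, on the feasible set one has $g(x) = 0$, hence $ L(x) = f(x) $ there, and in particular $ L(x^\ast) = f(x^\ast) $. These observations let me transfer the curvature information carried by $H^\ast$ to the behaviour of $f$ along $\M$.

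I would then suppose, for contradiction, that $x^\ast$ is \emph{not} a strict local minimizer of $(\mathcal P)$. There would exist a sequence $ (x^{(k)}) \subset \M $ with $ x^{(k)} \to x^\ast $, $ x^{(k)} \neq x^\ast $, and $ f(x^{(k)}) \leqslant f(x^\ast) $. Writing $ x^{(k)} = x^\ast + s_k\, d_k $ with $ s_k = \| x^{(k)} - x^\ast \| \to 0 $ and $ \| d_k \| = 1 $, compactness of the unit sphere lets me pass to a subsequence along which $ d_k \to d $, $ \| d \| = 1 $. The first step is to check that $ d \in \mathcal{T}_{x^\ast} $: from $ g(x^{(k)}) = 0 = g(x^\ast) $ a first-order expansion gives $ 0 = Dg(x^\ast)(x^{(k)} - x^\ast) + o(s_k) $, and dividing by $s_k$ and letting $ k \to \infty $ yields $ Dg(x^\ast)\, d = 0 $, that is, $ d \in \mathcal{T}_{x^\ast} $.

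The second, decisive step is a second-order expansion of $L$ about $x^\ast$. Using $ \nabla L(x^\ast) = 0 $ and $ D^2 L(x^\ast) = H^\ast $ (which is where the $C^2$ assumption on $f$ and $g$ enters, through Taylor's theorem), this reads
\[ L(x^{(k)}) - L(x^\ast) = \tfrac12\, s_k^2\, \dual{H^\ast d_k}{d_k} + o(s_k^2). \]
Because the iterates are feasible, the left-hand side equals $ f(x^{(k)}) - f(x^\ast) \leqslant 0 $. Dividing by $ s_k^2/2 > 0 $ and passing to the limit would give $ \dual{H^\ast d}{d} \leqslant 0 $. But $d$ is a nonzero vector of $\mathcal{T}_{x^\ast}$, so positive-definiteness of $H^\ast$ on $\mathcal{T}_{x^\ast}$ forces $ \dual{H^\ast d}{d} > 0 $ — a contradiction. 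Hence $x^\ast$ must be a strict local minimizer.

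The delicate point, and the reason the hypothesis asks only for positive-definiteness on the tangent subspace rather than on all of $\RR^n$, is the interplay between the two steps: feasibility of $x^{(k)}$ controls $Dg(x^\ast)d_k$ only up to an $o(1)$ error, so the directions $d_k$ are merely \emph{asymptotically} tangent, while $H^\ast$ is controlled only on the exact subspace $\mathcal{T}_{x^\ast}$. The argument survives because the limiting direction $d$ lands precisely in $\mathcal{T}_{x^\ast}$ and the quadratic form $ \dual{H^\ast \cdot}{\cdot} $ is continuous, so the strict sign persists in the limit; getting this matching right is what I expect to require the most care. An alternative route, available when $x^\ast$ is in addition a regular point, would be to take a smooth local parametrization $\phi$ of $\M$ with $\phi(0)=x^\ast$ and show, by differentiating the identity $ g(\phi(t)) \equiv 0 $ twice and substituting the gradient condition \eqref{th: kkt}, that the Hessian of $ t \mapsto f(\phi(t)) $ at the origin equals $ D\phi(0)^{\top} H^\ast D\phi(0) $, which is positive definite; this makes the reduction explicit but at the cost of invoking the implicit function theorem.
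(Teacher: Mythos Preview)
Your proof is correct. Note, however, that the paper does not actually supply its own proof of this theorem: it is stated in Appendix~\ref{sec: constr} as a standard textbook result with a citation to \cite[Section 11.4]{BS03}, so there is no in-paper argument to compare against. The contradiction argument you give --- extracting a limiting unit direction $d$, showing $d\in\mathcal T_{x^\ast}$ from feasibility, and using the second-order Taylor expansion of the Lagrangian to force $\dual{H^\ast d}{d}\leqslant 0$ --- is one of the standard proofs found in optimization textbooks (including the one cited). Your closing remark that regularity is not needed for this route, only for the parametrization alternative, is also accurate and worth keeping.
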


\section{Newton's method}
  \label{sec: newton}

Let $ g : \RR\to\RR $ be a (non-linear) function and consider the problem
of finding a root of $g$, that is, of solving the equation $ g(x) = 0 $.
One well-known method is Newton's method (also known under the name of
Newton-Raphson). It consists of starting with some initial guess $ x^{(0)} 
\in\RR $ and defining the sequence $ x^{(k)} $ by iterating
\begin{equation}\label{eq: newton-1}
x^{(k+1)} = x^{(k)} - \frac{g(x^{(k)})}{g'(x^{(k)})}
\end{equation}

\begin{theorem}
Suppose $g$ is twice continuously differentiable and let $ x^\ast\in\RR $ such
that $ g(x^\ast) = 0 $ and $ g'(x^\ast) \neq 0 $. Then, if $ x^{(0)} $ is 
sufficiently close to $ x^\ast $, the sequence $ x^{(k)} $ defined by 
(\ref{eq: newton-1}) converges to $ x^\ast $.
Moreover, the convergence is quadratic.
\end{theorem}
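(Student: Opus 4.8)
The plan is to derive an exact recursion for the error $ e_k = x^{(k)} - x^\ast $ by means of a Taylor expansion, and then to exploit the continuity of $g'$ and $g''$ near $x^\ast$ to bound the error quadratically. First I would subtract $x^\ast$ from both sides of the iteration \eqref{eq: newton-1}, obtaining
\[ e_{k+1} = e_k - \frac{g(x^{(k)})}{g'(x^{(k)})}. \]
The idea is to rewrite $ g(x^{(k)}) $ using the vanishing of $g$ at $x^\ast$. Expanding $g$ around $x^{(k)}$ and evaluating at $x^\ast$, Taylor's theorem with Lagrange remainder gives, for some $\xi_k$ between $x^{(k)}$ and $x^\ast$,
\[ 0 = g(x^\ast) = g(x^{(k)}) - g'(x^{(k)})\, e_k + \tfrac{1}{2}\, g''(\xi_k)\, e_k^2. \]
Solving for $ g(x^{(k)}) $ and substituting into the error recursion, the linear terms cancel exactly, leaving the clean identity
\[ e_{k+1} = \frac{g''(\xi_k)}{2\, g'(x^{(k)})}\, e_k^2. \]

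Next I would turn this identity into a genuine quadratic estimate. Since $g$ is twice continuously differentiable and $ g'(x^\ast) \neq 0 $, continuity guarantees a radius $ \rho > 0 $ on which $ |g'| \geqslant \tfrac{1}{2}|g'(x^\ast)| $ (so that the iteration is well defined and $g'(x^{(k)})$ never vanishes) and $ |g''| \leqslant M $ for some $ M > 0 $. On $ \bar{B}_\rho(x^\ast) $ the identity above then yields $ |e_{k+1}| \leqslant C\, |e_k|^2 $ with $ C = M / |g'(x^\ast)| $, which is precisely the quadratic-order bound.

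Finally, to guarantee convergence I would shrink the initial neighbourhood. Choosing $ r \in (0, \rho] $ with $ Cr < 1 $, one checks that $ \bar{B}_r(x^\ast) $ is invariant under the iteration: if $ |e_k| \leqslant r $ then $ |e_{k+1}| \leqslant (C|e_k|)\,|e_k| \leqslant (Cr)\,|e_k| < |e_k| \leqslant r $. Hence all iterates stay in the ball and $ |e_k| \leqslant (Cr)^k |e_0| \to 0 $, proving convergence, while the estimate $ |e_{k+1}| \leqslant C|e_k|^2 $ delivers the quadratic order (and in fact $ |e_k| \leqslant C^{-1}(C|e_0|)^{2^k} $). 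The only delicate point is this invariance argument: one must pick $r$ small enough that a single step cannot eject the iterate from the region where both bounds on $g'$ and $g''$ are valid, so that the recursion can be applied repeatedly; everything else is routine once the exact error identity is in hand.
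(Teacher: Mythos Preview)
Your proof is correct and is in fact the standard textbook argument for local quadratic convergence of Newton's method: derive the exact error identity $e_{k+1} = \dfrac{g''(\xi_k)}{2\,g'(x^{(k)})}\,e_k^2$ via Taylor's theorem, then bound the coefficient using continuity of $g'$ and $g''$ and close with an invariance argument on a small ball. There is nothing to compare against in the paper itself, however: the paper states this theorem as a classical result in the appendix and does not supply a proof, moving directly on to the analogous statement for systems. Your argument would serve perfectly well as the omitted proof.
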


The same basic idea can be applied to systems of (non-linear) equations :

\begin{theorem}
Suppose $ g : \RR^n \to \RR^n $ is twice continuously differentiable and let 
$ x^\ast\in\RR^n $ such that $ g(x^\ast) = 0 $ and $ Dg(x^\ast) $ is invertible. 
Then, if $ x^{(0)} $ is sufficiently close to $ x^\ast $, the sequence $ x^{(k)} $ 
defined by 
\begin{equation}\label{eq: newton-syst}
x^{(k+1)} = x^{(k)} - \bigl(Dg(x^{(k)})\bigr)^{-1}\,g(x^{(k)})
\end{equation}
converges to $ x^\ast $.
Moreover, the convergence is quadratic.
\end{theorem}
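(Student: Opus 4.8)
The plan is to recast the iteration as a fixed-point map and exploit the invertibility hypothesis together with a second-order Taylor estimate. First I would define the Newton map $N(x) = x - \bigl(Dg(x)\bigr)^{-1} g(x)$ and observe that, since $g$ is $C^2$, the Jacobian $Dg$ is continuous; as $Dg(x^\ast)$ is invertible, the set of invertible matrices being open and matrix inversion being continuous, there is a radius $r_0 > 0$ on which $Dg(x)$ remains invertible with $\|\bigl(Dg(x)\bigr)^{-1}\| \leqslant M$ for some constant $M$. Thus $N$ is well defined and continuous on $\bar{B}_{r_0}(x^\ast)$, and $x^\ast$ is a fixed point because $g(x^\ast) = 0$.

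Next I would derive the error recursion. Writing $e_k = x^{(k)} - x^\ast$, a direct computation gives
\[
e_{k+1} = e_k - \bigl(Dg(x^{(k)})\bigr)^{-1} g(x^{(k)}) = \bigl(Dg(x^{(k)})\bigr)^{-1}\bigl[ Dg(x^{(k)})\, e_k - g(x^{(k)}) \bigr].
\]
Since $g(x^\ast) = 0$, the bracketed term equals the first-order Taylor remainder $g(x^\ast) - g(x^{(k)}) - Dg(x^{(k)})(x^\ast - x^{(k)})$ of $g$ at $x^{(k)}$. To bound it I would use the integral form of the remainder,
\[
g(x^\ast) - g(x^{(k)}) - Dg(x^{(k)})(x^\ast - x^{(k)}) = \int_0^1 \bigl[ Dg(x^{(k)} + t\,(x^\ast - x^{(k)})) - Dg(x^{(k)}) \bigr](x^\ast - x^{(k)})\, dt,
\]
which, using that $Dg$ is Lipschitz with some constant $L$ on $\bar{B}_{r_0}(x^\ast)$ (because $g$ is $C^2$), has norm at most $\tfrac{L}{2}\|e_k\|^2$. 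Combining with the bound on the inverse gives $\|e_{k+1}\| \leqslant \tfrac{ML}{2}\|e_k\|^2 =: C\|e_k\|^2$.

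Finally I would close the argument by choosing the radius small enough. Picking $r \leqslant r_0$ with $Cr < 1$ and setting $q = Cr < 1$, the bound $\|e_{k+1}\| \leqslant C\|e_k\|^2 \leqslant q\|e_k\|$ shows that $\bar{B}_r(x^\ast)$ is invariant under $N$ and that $\|e_k\| \to 0$. Quadratic convergence then follows from the same inequality: setting $\epsi_k = C\|e_k\|$ one has $\epsi_{k+1} \leqslant \epsi_k^2$, hence $\epsi_k \leqslant \epsi_0^{2^k}$, which is the quadratic rate. The step requiring the most care is the uniform control of the constants --- ensuring that $\|\bigl(Dg(x)\bigr)^{-1}\|$ and the Lipschitz constant of $Dg$ are bounded on one common ball, and using the integral form of the Taylor remainder rather than a naive mean value theorem (which fails for vector-valued functions, since each component would in general require its own intermediate point).
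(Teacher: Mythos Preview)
Your argument is correct and is the standard textbook proof of quadratic local convergence for Newton's method in $\RR^n$. Note, however, that the paper itself does not supply a proof of this statement: it is quoted in Appendix~\ref{sec: newton} as a well-known background result, so there is no ``paper's own proof'' to compare against.
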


Newton's method can be extended to solve under-determined equations.
For instance, let $ g : \RR^n\to\RR^m $, with $ m\leqslant n $, be a non-linear function 
and consider the problem of finding a root of $g$, that is, of solving the equation 
$ g(x) = 0 $.
If $ m<n $, this equation is under-determined; the set of solutions
will be a manifold $ \M $ in $ \RR^n $ of dimension $ n-m $.
However, it may be of interest to solve numerically this equation, that is,
to start with some $ x^{(0)} \in\RR^n $ and to build iteratively a sequence
$ x^{(k)} $ which converges to some $ x^\ast $ such that $ g(x^\ast) = 0 $.
This can be done, roughly speaking, by building a step orthogonal to the
level set of $g$ and obeying, within that orthogonal subspace, to the basic idea
of the Newton method.
The following result describes the procedure in detail.

\begin{theorem}\label{th: newton}
Let $ g : \RR^n \to \RR^m $ ($ m \leqslant n $) be differentiable, $Dg$ be of full
rank $m$ in an open convex set $D$ and let the following hold:
\begin{itemize}
\item[\bf (i)] there exists $ K \geqslant 0 $ and $ \alpha \in {]}0,1] $ such that
$ \| Dg(y) - Dg(x) \| \leqslant K \|y - x\|^\alpha $ for all $ x,y \in D $; 
\item[\bf (ii)] there is a constant $B$ for which $ \| Dg(x)^+ \| \leqslant B $ for all
$ x \in D $, where $ Dg(x)^+ = \nabla g(x)\,[Dg(x)\, \nabla g(x)]^{-1} $ is the
Moore-Penrose inverse of $Dg(x)$.
\end{itemize}
Furthermore, for $ \eta > 0 $, let
$ D_\eta = \left\{ x \in D : \, \|y - x\| < \eta \Rightarrow y \in D \right\} $.
Then there exists an $ \epsilon > 0 $ depending only on $K$, $\alpha$, $B$ and $\eta$ such that
if $ x^{(0)} \in D_\eta $ and $ \| Dg(x^{(0)}) \| < \epsilon $, then the iterates $ x^{(k)} $
determined by
\begin{equation}\label{eq: newton-m}
	x^{(k+1)} = x^{(k)} - Dg(x^{(k)})^+ g(x^{(k)})
\end{equation}
are well defined and converge to a point $ x^\ast \in D $ such that $ g(x^\ast) = 0 $.
Moreover, there is a constant $\beta$ for which
\[ \left\| x^{(k+1)} - x^\ast \right\| \leqslant \beta \left\| x^{(k)} - x^\ast \right\|^{1+\alpha},
	 \quad k = 0,1,2,\ldots \]
\end{theorem}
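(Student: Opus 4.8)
The plan is to adapt the classical Newton--Kantorovich argument to the rectangular (full row rank) setting, monitoring the \emph{residual} $a_k := \|g(x^{(k)})\|$ rather than the error, and exploiting that $Dg(x)^+$ is a genuine right inverse: since $\nabla g = (Dg)^T$ and $Dg(x)^+ = \nabla g(x)[Dg(x)\,\nabla g(x)]^{-1}$, one has $Dg(x)\,Dg(x)^+ = I$ on $\RR^m$. (I read the smallness hypothesis as $\|g(x^{(0)})\| < \epsilon$; the printed $\|Dg(x^{(0)})\| < \epsilon$ appears to be a typo, since hypothesis (ii) forces $\|Dg(x)\| \geq 1/B$ and would contradict it.) Writing $s^{(k)} = -Dg(x^{(k)})^+ g(x^{(k)})$ for the step, the first step is the fundamental per-iteration residual estimate. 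Using the fundamental theorem of calculus together with $Dg(x^{(k)})\,s^{(k)} = -g(x^{(k)})$, one gets
\[ g(x^{(k+1)}) = \int_0^1 \bigl[\, Dg(x^{(k)} + t\, s^{(k)}) - Dg(x^{(k)}) \,\bigr]\, s^{(k)}\, dt, \]
whence hypothesis (i) yields $a_{k+1} \leq \frac{K}{1+\alpha}\,\|s^{(k)}\|^{1+\alpha}$, while hypothesis (ii) gives $\|s^{(k)}\| \leq B\, a_k$. Setting $C := \frac{K B^{1+\alpha}}{1+\alpha}$ produces the key recursion $a_{k+1} \leq C\, a_k^{1+\alpha}$.

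Second, I would choose $\epsilon$ small enough that $\theta := C\,\epsilon^\alpha < 1$; then, as long as the iterates remain in $D$, the residuals obey $a_{k+1} \leq \theta\, a_k$, so $a_k \leq \theta^k a_0$ decays geometrically (in fact with order $1+\alpha$). Consequently the step lengths are summable, $\sum_{k}\|s^{(k)}\| \leq B\sum_k a_k \leq \frac{B a_0}{1-\theta}$. Shrinking $\epsilon$ further so that $\frac{B\epsilon}{1-\theta} < \eta$ guarantees, via the buffer in the definition of $D_\eta$ and the convexity of $D$, that every iterate together with the connecting segments stays inside $D$; in particular each $Dg(x^{(k)})$ retains full rank and every $Dg(x^{(k)})^+$ is well defined, so the iteration cannot break down. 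This is the step where the hypotheses must be balanced to close the induction ``iterates stay in $D$ $\Rightarrow$ estimates hold $\Rightarrow$ iterates stay in $D$''.

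Third, summability of $\|s^{(k)}\|$ makes $(x^{(k)})$ a Cauchy sequence, hence convergent to some $x^\ast \in D$; continuity of $g$ together with $a_k \to 0$ forces $g(x^\ast) = 0$. Note that $x^\ast$ is merely one point of the solution manifold $\M$, selected by the dynamics (each step lies in $\mathrm{Ran}(\nabla g(x^{(k)}))$, i.e.\ normal to the level sets), so one should \emph{not} expect to control the error from below by the residual.

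Finally, for the convergence rate I would combine three comparisons. Bounding $\|Dg\|$ by a constant $M$ on the bounded convex region swept by the iterates gives, via the fundamental theorem of calculus and $g(x^\ast)=0$, the upper comparison $a_k \leq M\,\|x^{(k)} - x^\ast\|$. The tail of the geometrically dominated step series gives $\|x^{(k+1)} - x^\ast\| \leq \sum_{j\geq k+1}\|s^{(j)}\| \leq \frac{B}{1-\theta}\,a_{k+1}$. Chaining these with $a_{k+1} \leq C\,a_k^{1+\alpha}$ yields
\[ \|x^{(k+1)} - x^\ast\| \leq \frac{BC\,M^{1+\alpha}}{1-\theta}\,\|x^{(k)} - x^\ast\|^{1+\alpha}, \]
so the claim holds with $\beta = BCM^{1+\alpha}/(1-\theta)$. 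The main obstacle is precisely this last detour: because the zero set is a manifold, the error $\|x^{(k)} - x^\ast\|$ admits no lower bound in terms of $a_k$, so the order $1+\alpha$ rate must instead be extracted by bounding the error above by the \emph{tail} of the step lengths (hence by $a_{k+1}$), using the residual only as a bridge. Verifying that the uniform bounds $M$ and $\|Dg(x)^+\| \leq B$ persist on the region actually visited --- where convexity of $D$ and the H\"older estimate (i) are essential --- is the other point requiring care.
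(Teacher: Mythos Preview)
The paper does not actually prove this theorem: immediately after the statement it refers the reader to \cite{WW90} (``Convergence theory on such Newton-like methods, for under-determined systems of equations, can be found in \cite{WW90} and references therein''), so there is no in-paper proof to compare against. Your argument is precisely the standard Newton--Kantorovich estimate adapted to the underdetermined setting, which is what that reference provides; it is correct and complete as sketched, including your identification of the apparent typo ($\|g(x^{(0)})\|<\epsilon$ rather than $\|Dg(x^{(0)})\|<\epsilon$). The only point worth flagging is that your constant $M$ bounding $\|Dg\|$ on the ball $\bar B_\eta(x^{(0)})$ comes from $\|Dg(x^{(0)})\|+K\eta^\alpha$ and hence depends on the initial point; this is harmless since the statement only requires $\epsilon$, not $\beta$, to depend solely on $K,\alpha,B,\eta$.
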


Note that, if $ m=n $, the iteration (\ref{eq: newton-m}) above reduces to 
equation (\ref{eq: newton-syst}) which defines the sequence $ x^{(k)} $ in the 
determined case;
the rate of convergence is still quadratic if $Dg$ is Lipschitz continuous.
Convergence theory on such Newton-like methods, for under-determined systems of equations,
can be found in \cite{WW90} and references therein.

Since the considered system of equations has many solutions (a manifold of them),
it is legitimate to question whether the sequence $ x^{(k)} $ must be so rigidly defined.
That is, one may ask whether the step $ \delta^{(k)} $ must be necessarily orthogonal
to the level set of $g$.
The following result (whose proof is immediate) shows that the philosophy behind
Newton's method can be applied while allowing for uncertainties in the
definition of the sequence $ x^{(k)} $.

\begin{proposition}\label{prop: Newton}
Let $ g : \RR^n\to\RR^m $ be continuously differentiable and consider a sequence 
$ x^{(k)} $ which converges to some $ x^\ast\in\RR^n $ and which satisfies
\begin{equation}\label{eq: newton-n}
Dg(x^{(k)})\,\delta^{(k)} = - g(x^{(k)})
\end{equation}
where $ \delta^{(k)} = x^{(k+1)} - x^{(k)} $.
In coordinate notation,
\begin{equation*}
\sum_{j=1}^n g_{i,j}(x^{(k)})\, \delta_j^{(k)} = - g_i(x^{(k)})\,,\
1\leqslant i\leqslant m
\end{equation*}
Then $ x^\ast $ is a solution, that is, $ g(x^\ast) = 0 $.
\end{proposition}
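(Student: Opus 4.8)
The plan is to pass to the limit in the defining relation \eqref{eq: newton-n}, exploiting that convergence of the sequence $ x^{(k)} $ forces the increments $ \delta^{(k)} $ to vanish. First I would observe that, since $ x^{(k)} \to x^\ast $, the two sequences $ x^{(k+1)} $ and $ x^{(k)} $ share the same limit $ x^\ast $, so that $ \delta^{(k)} = x^{(k+1)} - x^{(k)} \to 0 $ as $ k \to \infty $.

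Next I would use the continuous differentiability of $g$: the Jacobian $ Dg $ is then continuous, whence $ Dg(x^{(k)}) \to Dg(x^\ast) $ and, in particular, the operator norms $ \| Dg(x^{(k)}) \| $ are eventually bounded by some constant $M$. It follows that $ \| Dg(x^{(k)})\, \delta^{(k)} \| \leqslant M\, \| \delta^{(k)} \| \to 0 $, so the left-hand side of \eqref{eq: newton-n} vanishes in the limit. The relation \eqref{eq: newton-n} then forces $ g(x^{(k)}) \to 0 $; combining this with the continuity of $g$ (which yields $ g(x^{(k)}) \to g(x^\ast) $), I conclude that $ g(x^\ast) = 0 $, as desired.

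As the statement itself anticipates, there is essentially no obstacle here — the proof is immediate, being nothing more than a limiting argument. The only point requiring a modicum of care is the uniform boundedness of the operators $ Dg(x^{(k)}) $, which is precisely what allows the product of a (uniformly) bounded operator with a vanishing vector to tend to zero; this in turn follows at once from the continuity of $ Dg $ evaluated along the convergent sequence $ x^{(k)} $.
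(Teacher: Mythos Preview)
Your argument is correct and is precisely the immediate limiting argument the paper alludes to (the paper omits the proof entirely, noting only that it is immediate). Passing to the limit in \eqref{eq: newton-n} using $\delta^{(k)}\to 0$, boundedness of $Dg(x^{(k)})$, and continuity of $g$ is exactly the intended reasoning.
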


Note that condition (\ref{eq: newton-m}) implies (\ref{eq: newton-n}).
Note also that, if $ m=n $, then (\ref{eq: newton-syst}), (\ref{eq: newton-m}) and
(\ref{eq: newton-n}) are all equivalent.
However, for $ m<n $, unlike (\ref{eq: newton-syst}) or (\ref{eq: newton-m}), 
equation (\ref{eq: newton-n}) does not define uniquely the sequence $ x^{(k)} $; 
it simply states a property of the sequence.
The user has the freedom to chose $ n-m $ components of
$ \delta^{(k)} $ : those orthogonal to $ \nabla g_1(x^{(k)}) $, $\nabla g_2(x^{(k)}) $,
$ \dots $, $\nabla g_m(x^{(k)}) $, that is, components tangent to the level set of $g$.
The user may use this freedom in order to
solve other equation(s) or to minimize some functional.

\begin{remark}\label{rem: faster}
The quantity $ g(x^{(k)}) $ converges to zero faster than the convergence of
$ x^{(k)} \to x^\ast $.
For instance, suppose $ x^{(k)} $ converges linearly, that is, there is a constant
$ L\in {]}0,1{[} $ such that $ \|x^{(k)}-x^\ast\| $ is of order $ O(L^k) $. 
Then $ \delta^{(k)} = x^{(k+1)}-x^{(k)} $ is also of order $ O(L^k) $ and
a simple Taylor expansion about $x^{(k)}$ yields
\[ g(x^{(k+1)}) = \underbrace{g(x^{(k)}) + Dg(x^{(k)})\,\delta^{(k)}}
_{= \: 0, \text{ due to equation (\ref{eq: newton-n})}}
 + \; O(\|\delta^{(k)}\|^2); \] 
thus, $ g(x^{(k)}) $ is of order $ O(L^{2k}) $.
Note however that this is not quadratic convergence, but simply an improved linear one.
Nor should quadratic convergence be expected because, unlike in
Theorem~\ref{th: newton}, here the step $ \delta^{(k)} = x^{(k+1)} - x^{(k)} $
is not a full Newton-type step.
\end{remark}

\begin{remark}\label{rem: distance}
It can be proven that, locally around a regular point $ x^\ast \in \M = 
\left\{ x \in \RR^n :\, g(x) = 0 \right\} $ (see Definition~\ref{regular point} in \ref{sec: constr}), 
the quantity $ \|g(x^{(k)})\| $ is of the same order of magnitude as the distance 
$ \mathrm{dist}(x^{(k)},\M) $ to the manifold $ \M $.
This geometric property, taken together with Remark \ref{rem: faster} above, 
implies that the distance $ \mathrm{dist}(x^{(k)},\M) $
converges to zero faster than the convergence of $ x^{(k)} \to x^\ast $.
\end{remark}



\bibliographystyle{elsarticle-num}
\bibliography{<your-bib-database>}



\end{document}